\DeclareMathOperator{\ME}{\mathsf{E}}
\DeclareMathOperator{\prob}{\mathsf{P}}
\DeclareMathOperator{\var}{var}
\DeclareMathOperator{\cov}{cov}
\DeclareMathOperator{\corr}{corr}
\DeclareMathOperator{\indicatorfun}{\mathbf{1}}
\DeclareMathOperator{\hyper}{\sideset{_2}{_1}{\mathop F}}
\newcommand{\dto}{\xrightarrow{d}}
\newcommand{\deq}{\overset{d}{=}}
\newcommand{\cauchy}{\mathcal{C}(1)}
\newtheorem{thm}{Theorem}[section]
\newtheorem{prop}[thm]{Proposition}
\newtheorem{lemma}[thm]{Lemma}
\newtheorem{corollary}[thm]{Corollary}
\theoremstyle{definition}
\newtheorem{definition}[thm]{Definition}
\theoremstyle{remark}
\newtheorem{remark}[thm]{Remark}
\begin{document}

\title[Gaussian Volterra processes]{Gaussian Volterra processes: asymptotic growth and statistical estimation}

\author{Yuliya Mishura, Kostiantyn Ralchenko, and Sergiy Shklyar}
\address{Department of Probability Theory, Statistics and Actuarial Mathematics,
Taras Shevchenko National University of Kyiv,
64/13, Volodymyrs'ka St., 01601 Kyiv, Ukraine}
\email[Yu.\,Mishura]{yuliyamishura@knu.ua}
\email[K.\,Ralchenko]{kostiantynralchenko@knu.ua}
\email[S.\,Shklyar]{shklyar@univ.kiev.ua}

\thanks{The research by Yuliya Mishura has been supported by the Swedish Foundation for Strategic Research, grant no. UKR22–0017.
The research by Kostiantyn Ralchenko has been supported by the Sydney Mathematical Research Institute under the Ukrainian Visitors Program.}

\subjclass[2020]{60G22, 60G15, 60G17, 60G18, 62F12}

\date{}

\keywords{Gaussian Volterra process, asymptotic growth, long- and short-range dependence, parameter estimation, Ornstein--Uhlenbeck process}

\begin{abstract}
The paper is devoted to three-parametric self-similar Gaussian
Volterra processes that generalize fractional Brownian motion.
We study the asymptotic growth of such processes and the properties of long- and  short-range dependence.
Then we consider the problem of the drift parameter estimation for Ornstein--Uh\-len\-beck process driven by Gaussian Volterra process under consideration.
We construct a strongly consistent estimator and investigate its asymptotic properties. Namely, we prove that it has the Cauchy asymptotic distribution.
\end{abstract}

\maketitle

\section{Introduction}\label{s:1}
At the moment, the concept of fractional Brownian motion is
already so popular and mastered that it is not necessary to
explain what kind of stochastic process it is. However, we
still recall that fractional Brownian motion $B^H=\{B^H_t,
t\ge 0\}$ with Hurst index $H\in(0,1)$ is a zero mean
Gaussian stochastic process with covariance function
$$\ME B^H_tB^H_s = \tfrac12\left  (s^{2H}+t^{2H}-|s-t|^{2H}\right ).$$
It is neither Markovian nor semimartingale process except
the case $H=1/2$, when it is a standard Brownian motion. The
fact that it is non-Markovian makes it a very realistic
model in economics, technology, physics and biology, because
only the simplest processes are in reality Markovian. It
also has stationary increments and the property of self-similarity
with a coefficient equal to the Hurst index. On the
one hand, these are very convenient properties that allow
to construct a developed analytical apparatus and easier to
simulate trajectories. On the other hand, these properties
are quite restrictive and not always fulfilled, and
therefore have been criticized many times.

In order to
consider more general processes, there are several possibilities how to move. As one of possible generalizations, the multifractional
Brownian motion was introduced, see e.g.~\cite{Ayache2000,R2011}.
This process has a Hurst function
$H_t$ instead of fixed Hurst index. Such an object is quite
flexible, but measuring and modeling the Hurst function is
not always easy because it's usually easier to evaluate some
numerical parameter than a function that changes over time.
However, as we have already said, the presence of a single
index strongly regulates the properties of the
process. Another generalization was studied in the paper \cite{sovi}, where  the general Fredholm and Volterra Gaussian processes of the form
\[
X_t=\int_0^tK(t,s)dW_s \quad\text{and}\quad X_t=\int_0^TK_T(t,s)dW_s,
\quad t\in[0,T],
\]
where $W$ is a Wiener process, were introduced. Such approach is very interesting, but   the less facts we know about the kernel  of such a representation, the less useful properties of the corresponding process can be established. In this connection, in the paper \cite{mishshevshk}  we considered a kernel of the form
\[
K(t,s) = a(s) \int_s^t b(u)\,c(u-s) \, du,
\]
with   functions $a$, $b$, $c$  that guarantied the existence of such integral. This is a more concrete representation, but the properties of such a process, being very interesting in their specific features, are also   limited by its existence and the smoothness of trajectories.
Therefore, in this work, as well as in papers
\cite{Part1,Part2} we decided to go the following way: to take the
integral representation of the fractional Brownian motion (see \cite{Norros1999})
$$B^H_t=C_H \int_0^t K_H(t,s) \, dW_s, \quad t\ge 0,$$
where $C_H$ is some constant normalizing factor, and for $H\in(\frac12,1)$,
\begin{equation}\label{eq:KH-fBm}
K_H(t,s)=s^{1/2-H} \int_s^t u^{H-1/2} (u-s)^{H-3/2} \, du,
\end{equation}
and replace in it the unique index $H$ by three possibly
different indices $\alpha, \beta, \gamma$, getting a kernel
of the form
$$K(t,s)=s^{\alpha} \int_s^t u^{\beta} (u-s)^{\gamma} \, du,$$
and consider a process of the form
\begin{equation}\label{mainint}
X_t=\int_0^ts^{\alpha}\! \int_s^t u^{\beta} (u-s)^{\gamma}\,du\,dW_s,\,t\ge 0.
\end{equation}
Of course, $\alpha, \beta, \gamma$ should be chosen in a way
supplying the existence and at least continuity of the
integral of this kernel w.r.t.\@{} a Wiener process.
With this kernel, we get a self-similar process
\cite[Proposition~1]{Part1}, but, generally speaking, without
restrictive property of stationarity of increments
\cite[Theorem~2]{Part1}.
Moreover, the choice of indices allows us to get the process
with both short and long memory while in the fractional
Brownian case the representation~\eqref{eq:KH-fBm}
for the kernel $K_H(t,s)$ exists only for $H>1/2$,
and the respective fBm can have a long memory only. What is
important, we can integrate the nonrandom functions w.r.t.\ this process in the unified way
\cite[Definition~1]{Part2},
while the integration w.r.t.\ the fBm differs essentially for $H$
depending if it is more or less than $1/2$
\cite[Section~2.2]{Norros1999}.
In particular, the class of functions integrable w.r.t.\ the fBm depends on $H$.
So, with the help of the unified kernel
$K(t,s)$ we can model a variety of properties, that is
impossible for fBm.

The paper is organized as follows.
In Section~\ref{s:2} we investigate the asymptotic growth of the process $X$ and prove almost sure upper bounds for it.
In Section~\ref{s:3} we study the asymptotic behavior of the correlations between $X_1$ and the increment $X_{t+1}-X_t$ and establish the conditions for short- and long-range dependence for the process \eqref{mainint} in terms of $\alpha$, $\beta$ and $\gamma$.
Section~\ref{s:4} is devoted to Ornstein--Uhlenbeck model, where the noise is a Gaussian Volterra process $X$ of the form \eqref{mainint}. For such model we construct a strongly consistent drift parameter estimator and study its asymptotic properties. Namely, we prove its convergence in law to a certain Cauchy-type distribution.
Appendix~\ref{s:A} contains auxiliary results concerning self-similar processes. In Appendix~\ref{s:B} we establish asymptotic properties of some deterministic integrals arising in Section~\ref{s:4}.

\section{Asymptotic growth of the process $X$}\label{s:2}
In what follows, let $\{X_t,t>0\}$ be a
self-similar process with self-similarity
index $\rho$. Based on this process, and applying approach from
\cite{Lamperti1962,Yazigi2015}, we construct another process,
$\{Y_s,s\in\mathbb{R}\}$,
$Y_s = e^{-\rho s} X_{e^s}$,
which is stationary
(see Proposition~\ref{prop:sstostat-part1} in Appendix~\ref{apx:apx}).
Then we use the known results on the asymptotic growth of the stationary
process, $Y$,
to obtain bounds for the asymptotic growth of the process~$X$.

\begin{thm}\label{thm:ssGrow2}
Let
$\{X_t, t>0\}$
be a path-continuous
self-similar zero-mean Gaussian process
with self-similarity exponent $\rho\in\mathbb{R}$ and
with $\ME X_1^2 = \sigma^2$, $\sigma>0$.
Then
\begin{gather}
\limsup_{t\to+\infty}
\frac{|X_t|}{t^\rho \, \sqrt{2 \log \log t}} \le \sigma
\quad \mbox{a.s.},
\nonumber\\
\text{and }\;\sup_{t\ge 3}
\frac{|X_t|}{t^\rho \, \sqrt{\log \log t}}
< \infty
\quad \mbox{a.s.}
\label{neq:ssGrow2-p2}
\end{gather}
\end{thm}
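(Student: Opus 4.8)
The plan is to transfer the problem to the stationary process $Y_s = e^{-\rho s} X_{e^s}$ and invoke a law-of-the-iterated-logarithm-type bound for stationary Gaussian processes. By Proposition~\ref{prop:sstostat-part1}, $Y$ is a stationary, path-continuous, zero-mean Gaussian process with $\ME Y_s^2 = \ME X_1^2 = \sigma^2$. Substituting $t = e^s$ turns the claimed estimates into statements about $Y$: since $t^\rho\sqrt{2\log\log t}$ corresponds to $e^{\rho s}\sqrt{2\log s}$, the first assertion becomes
\[
\limsup_{s\to+\infty} \frac{|Y_s|}{\sqrt{2\log s}} \le \sigma \quad\text{a.s.},
\]
and the second becomes $\sup_{s\ge\log 3} |Y_s|/\sqrt{\log s} < \infty$ a.s. So the whole theorem reduces to a maximal/growth bound for a stationary continuous Gaussian process with variance $\sigma^2$.

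The first step is therefore to establish that growth bound for $Y$. The classical route: for each integer $n$, control $\sup_{s\in[n,n+1]} |Y_s|$ using a Borell--TIS or Dudley-type bound, or even more simply using a Gaussian tail estimate combined with Fernique's theorem on the finiteness of $\ME \sup_{s\in[0,1]}|Y_s|$ (which holds because $Y$ is a.s.\ path-continuous on a compact interval, hence bounded, and Gaussian suprema are then integrable and have Gaussian tails). Concretely, set $M_n = \sup_{s\in[n,n+1]}|Y_s|$. Stationarity gives $M_n \deq M_0$ for all $n$, and Gaussian concentration yields $\prob(M_n > u) \le \exp(-(u - \ME M_0)^2/(2\sigma^2))$ for $u > \ME M_0$. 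Taking $u = u_n = \sigma\sqrt{2(1+\varepsilon)\log n}$ makes $\sum_n \prob(M_n > u_n)$ summable, so by Borel--Cantelli $M_n \le u_n$ eventually a.s.; since $\varepsilon>0$ is arbitrary this gives $\limsup_n M_n/\sqrt{2\log n} \le \sigma$ a.s., and a routine interpolation between consecutive intervals (note $\sqrt{2\log(n+1)}/\sqrt{2\log n}\to 1$) upgrades this to the $\limsup$ over all real $s$. For the second, cruder bound it suffices to note $M_n \le \sigma\sqrt{2\cdot 2\log n}$ eventually, hence $M_n/\sqrt{\log n}$ is a.s.\ bounded, and again interpolate.

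The last step is purely bookkeeping: undo the substitution. Given $t\ge 3$, write $s=\log t$ and $n=\lfloor s\rfloor$, so $X_t = e^{\rho s} Y_s = t^\rho Y_s$ with $|Y_s|\le M_n$; dividing by $t^\rho\sqrt{2\log\log t}=e^{\rho s}\sqrt{2\log s}$ and using $\log s \sim \log n$ gives exactly \eqref{neq:ssGrow2-p2} and the first inequality. I expect the main obstacle to be the ``interpolation'' step done carefully — i.e.\ passing from the discrete sequence $M_n$ to the genuine $\limsup$ over continuous $t$ while keeping the constant sharp at $\sigma$ in the first inequality — together with citing the right off-the-shelf tool (Fernique / Borell--TIS) to guarantee Gaussian tails for $M_0$; everything else is a short Borel--Cantelli argument. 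One caveat to address: the bound needs $\rho$ real and $t\to\infty$, so no issue arises at $t\to 0$, and self-similarity is used only through Proposition~\ref{prop:sstostat-part1}, so the Volterra structure \eqref{mainint} plays no role here.
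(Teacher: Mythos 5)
Your proposal is correct, and its skeleton coincides with the paper's: both reduce the theorem via the Lamperti transformation (Proposition~\ref{prop:sstostat-part1}) to a growth bound for the stationary, continuous, zero-mean Gaussian process $Y_s=e^{-\rho s}X_{e^s}$ with $\ME Y_s^2=\sigma^2$, and then undo the substitution $t=e^s$, obtaining \eqref{neq:ssGrow2-p2} from the limsup bound together with a.s.\ boundedness of the continuous path on compact intervals. The difference is in how the stationary growth bound is obtained: the paper simply cites Marcus's theorem (\cite[Theorem~1.4]{Marcus1972}, stated as Theorem~\ref{thm:Marcus-theorem}), which gives $\limsup_{s\to\infty}|Y_s|/\sqrt{2\log s}\le 1$ for a continuous stationary Gaussian process of unit variance, whereas you reprove this ingredient from scratch by blocking $[n,n+1]$, using stationarity ($M_n\deq M_0$), Fernique/Borell--TIS concentration for $M_0$, Borel--Cantelli with $u_n=\sigma\sqrt{2(1+\varepsilon)\log n}$, and interpolation. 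Your route is self-contained (modulo the concentration inequality) and makes transparent why the sharp constant $\sigma$ survives the passage from the discrete blocks to continuous time; the paper's route is shorter but rests on an external reference. Two small points of polish in your argument: Borell--TIS is stated for $\sup Y$ rather than $\sup|Y|$, so you should either apply it to $Y$ and $-Y$ separately or accept a harmless factor $2$ in the tail bound; and in the bookkeeping for $t\ge 3$ you divide by $\sqrt{\log n}$ with $n=\lfloor\log t\rfloor$, which is $0$ for $n=1$, so the range $t\in[3,e^2]$ should be handled (as the paper does) by local boundedness of the continuous function $t\mapsto |X_t|/(t^\rho\sqrt{\log\log t})$ on that compact interval. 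Neither point is a genuine gap.
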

\begin{proof}
As it was mentioned above, according to Proposition~\ref{prop:sstostat-part1},
 $\{Y_s,\; s\in\mathbb{R}\}$
with
\[
Y_s = e^{-\rho s} \, X_{e^s}
\]
is a continuous zero-mean stationary Gaussian process.
Notice  that
\[ \ME Y_s^2 = e^{-2\rho s} \ME X_{e^s}^2 = \ME X_1^2 = \sigma^2 .\]
Thus,
$Y_s / \sigma \sim \mathcal{N}(0,1)$ for all $s$.

By \cite[Theorem~1.4]{Marcus1972}
(see Theorem~\ref{thm:Marcus-theorem} in Appendix~\ref{apx:apx}),
\[
\limsup_{s\to+\infty} \frac{|Y_s|}{\sigma\, \sqrt{2 \log s}}
\le 1.
\]
Thus,
\[
\limsup_{t\to+\infty}
\frac{|X_t|}{t^\rho \, \sqrt{2 \log\log t}} =
\limsup_{s\to+\infty} \frac{|X_{e^s}|}{ e^{\rho s} \, \sqrt{2 \log s}} =
\limsup_{s\to+\infty} \frac{|Y_s|}{\sqrt{2 \log s}}
\le \sigma.
\]
As the process $\{t^{-\rho} \, |X_t| \mathbin{/} \sqrt{2 \log \log t},
\; \allowbreak t \ge 3\}$
is bounded on any finite interval, this implies
\begin{equation*}
\sup_{t\ge 3}
\frac{|X_t|}{t^{\rho} \, \sqrt{2 \log\log t}} < \infty,
\end{equation*}
whence \eqref{neq:ssGrow2-p2} follows.
\end{proof}

\begin{remark}
1) We did not specify  in Theorem~\ref{thm:ssGrow2}
whether the self-similarity exponent $\rho$ is negative, zero or positive.
However, a non-trivial self-similar process with negative
self-similarity exponent cannot be continuous at point $0$
due to Theorem~\ref{thm:Hnonneg}.
In this connection,  we assumed in Theorem~\ref{thm:ssGrow2}  that
the process $X$ is continuous on $(0,+\infty)$ rather than
on $[0,+\infty)$.

2) Of course, there is a general characterization of self-similar Volterra process $X_t=\int_0^t K(t,s) dW_s$  in terms of the kernel $K$: the process $X$ is $\rho$-self-similar if and only if for any $a>0$, $t,s>0$
\[
\int_0^{t\wedge s}K(at,az)K(as,az)dz=a^{2\rho-1}\int_0^{t\wedge s}K(t,z)K(s,z)dz.\]
\end{remark}
Now, let us proceed with the process $X$ from \eqref{mainint} and apply Theorem \ref{thm:ssGrow2}.
 \begin{thm}\label{thm:2.3}
Let
\begin{equation}\label{neq:procXconds}
\alpha>-\frac12,\qquad
\gamma>-1, \qquad
\alpha+\beta+\gamma > -\frac32 ,
\end{equation}
and let $\{X_t,\; t\ge 0\}$ be a path-continuous
modification of the process \eqref{mainint}, existing for these values of parameters, according to Theorems 1 and 6 from \cite{Part1}.

Then
\begin{gather}
\limsup_{t\to\infty}
\frac{|X_t|}{t^{\alpha+\beta+\gamma+1.5} (2 \log \log t)^{1/2}}
\le \ME X_1^2 < \infty,
\nonumber
\\
\label{neq:thm-ssGrow1-asj-a}
\sup_{t\ge 3}
\frac{|X_t|}{t^{\alpha+\beta+\gamma+1.5} (\log \log t)^{1/2}}
< \infty
\quad \mbox{a.s.}
\end{gather}
\end{thm}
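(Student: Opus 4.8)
The plan is to deduce this result directly from Theorem~\ref{thm:ssGrow2}. The process $X$ from \eqref{mainint}, under the parameter conditions \eqref{neq:procXconds}, is a zero-mean Gaussian Volterra process, and by \cite[Proposition~1]{Part1} it is self-similar; I would first identify its self-similarity exponent. Substituting $s \mapsto as$, $u \mapsto au$ into the kernel $K(t,s)=s^\alpha\int_s^t u^\beta(u-s)^\gamma\,du$ and using the scaling $dW_{as} \deq a^{1/2}\,dW_s$ of Wiener noise, a direct change of variables shows that $X_{at} \deq a^{\alpha+\beta+\gamma+3/2} X_t$, so $\rho = \alpha+\beta+\gamma+3/2$. (This is exactly the exponent appearing in the statement, which is reassuring.) By Theorems~1 and~6 of \cite{Part1} cited in the hypothesis, $X$ has a path-continuous modification on $(0,+\infty)$, and in fact on $[0,+\infty)$ since $\rho>0$ under \eqref{neq:procXconds}; but continuity on $(0,+\infty)$ already suffices for Theorem~\ref{thm:ssGrow2}.

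Next I would verify the remaining hypothesis of Theorem~\ref{thm:ssGrow2}, namely that $\sigma^2 := \ME X_1^2 \in (0,\infty)$. Finiteness follows from the existence of the process (the stochastic integral $\int_0^1 K(1,s)\,dW_s$ is well-defined precisely because $\int_0^1 K(1,s)^2\,ds<\infty$ under \eqref{neq:procXconds}), and positivity is clear since the kernel $K(1,s)$ is not identically zero. With $\rho = \alpha+\beta+\gamma+3/2$ and $\sigma^2=\ME X_1^2$ in hand, Theorem~\ref{thm:ssGrow2} gives immediately
\[
\limsup_{t\to+\infty}\frac{|X_t|}{t^{\alpha+\beta+\gamma+3/2}\sqrt{2\log\log t}} \le \sigma = \sqrt{\ME X_1^2},
\qquad
\sup_{t\ge 3}\frac{|X_t|}{t^{\alpha+\beta+\gamma+3/2}\sqrt{\log\log t}} < \infty \quad\text{a.s.}
\]
The second assertion is precisely \eqref{neq:thm-ssGrow1-asj-a}, and the first matches the displayed $\limsup$ bound in the theorem (noting that the statement writes the right-hand side as $\ME X_1^2$, which should read $\sqrt{\ME X_1^2}$, but in any case $\le \ME X_1^2 < \infty$ holds as well whenever $\ME X_1^2 \ge 1$, and more importantly the finiteness is the essential content).

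The only genuine work is the self-similarity computation and the identification of the exponent; once $\rho = \alpha+\beta+\gamma+3/2$ is confirmed via the change of variables, everything else is a direct citation of Theorem~\ref{thm:ssGrow2} together with \cite{Part1}. I expect no real obstacle here — the main point to be careful about is to invoke the correct normalization of the self-similarity exponent so that it agrees with the power of $t$ in the statement, and to note explicitly that path-continuity on $(0,\infty)$, guaranteed by \cite[Theorems~1 and~6]{Part1}, is all that Theorem~\ref{thm:ssGrow2} requires.
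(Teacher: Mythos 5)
Your proposal is correct and takes essentially the same route as the paper's proof: invoke \cite[Proposition~1]{Part1} for self-similarity with exponent $\alpha+\beta+\gamma+\tfrac32$, note that $X$ is a path-continuous zero-mean Gaussian process with $0<\ME X_1^2<\infty$, and apply Theorem~\ref{thm:ssGrow2}. Your side remark that Theorem~\ref{thm:ssGrow2} actually yields the constant $\sigma=\sqrt{\ME X_1^2}$ rather than $\ME X_1^2$ is a fair observation about the statement's normalization, but it does not affect the argument.
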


\begin{proof}
The process $X$ is Gaussian; it has zero means and continuous paths.
By \cite[Proposition~1]{Part1}, the process $X$
is self-similar with self-similarity exponent
\[
H = \alpha + \beta + \gamma + \frac32 .
\]
By Theorem~\ref{thm:ssGrow2}, the process $X$ satisfies
the desired inequalities a.s.
\end{proof}

\section{Long- and  short-range dependence}\label{s:3}
In this section, we define the long-memory or short-memory
properties of a stochastic process.
\begin{definition}\label{def:shlong-dep-i1}
	Let $\{X_t,\; t\ge0\}$ be a stochastic process
	with finite second moments (i.e.\@{} $\ME X_t^2 < \infty$
	for all $t\ge 0$).

	If $\sum_{n=1}^\infty |\cov(X_1,\, X_{n+1}-X_n)|<\infty$,
	then the process $X$ is called
	\textit{short-range dependent}.

	Otherwise, if $\sum_{n=1}^\infty |\cov(X_1,\, X_{n+1}-X_n)|=\infty$,
	then the process $X$ is called
	\textit{long-range dependent}.
\end{definition}

\begin{remark}
In some cases, it is not natural to connect long- or short-range dependence to the value $X_1$. However, for self-similar process $X_t,\, t\ge 0$ with index $\rho$ for any fixed $k>0$ and $n\ge k$ 
\[
\cov(X_k, X_{n+1}-X_n)=k^{2\rho}\cov\left (X_1, X_{\frac{n+1}{k}}-X_{\frac{n}{k}}\right ),
\]
therefore asymptotics will be the same no matter where you start.
\end{remark}

If $X_0=0$, $\var(X_1)>0$ and the process $X$ has
positively correlated increments,
then the definition of short-range and long-range dependence
can be simplified. The process $X$ is short-range dependent
if $\lim_{t\to +\infty} \cov(X_1, X_t)$ is finite,
and $X$ is long-range dependent if
$\lim_{t\to +\infty} \cov(X_1, X_t) = +\infty$.
Similar equivalence holds true for the process with
negatively correlated increments:
$X$ is short-range dependent
if $\lim_{t\to +\infty} \cov(X_1, X_t)$ is finite,
and $X$ is long-range dependent if
$\lim_{t\to +\infty} \cov(X_1, X_t) = -\infty$.

Unless $\cov(X_1,\, X_{n+1} - X_n) = 0$ for all $n$ great enough,
we can make the process $X$ long-range or short-range dependent
by multiplying it by a non-random function.
In order to reduce the effect of ``variable scale'',
we consider Pearson correlation instead of covariance,
and study the convergence of the series
$\sum_{n=1}^\infty |\corr(X_1,\, X_{n+1}-X_n)|$.

For the rest of the section, we consider the stochastic
process $\{X_t, t\ge 0\}$ defined
in \eqref{mainint}.
Due to \cite[Eq.~(8)]{Part1},
\begin{equation}\label{eq:invrcov}
\begin{split}
\MoveEqLeft
	\ME[(X_{t_2}-X_{t_1}) (X_{t_4}-X_{t_3})]
	\\
	&= 	\int_0^{t_2\wedge t_4} s^{2\alpha}
	\int_{s\vee t_1}^{t_2} u^\beta (u{-}s)^\gamma \, du
	\int_{s\vee t_3}^{t_4} v^\beta (v{-}s)^\gamma \, dv \, ds > 0 .
\end{split}
\end{equation}
if $0\le t_1<t_2$ and $0\le t_3 < t_4$.
Thus, the increments of the process $X$ are positively correlated.

In order to find whether the process $X$ is long-range or short-range
dependent,
we evaluate the asymptotics of
\[
	\cov(X_1,\, (X_{t+1}-X_t))
	\quad \mbox{and} \quad
	\corr(X_1,\, (X_{t+1}-X_t))
\]
and check the convergence of the series
\[
	\sum_{n=1}^\infty \cov(X_1,\, (X_{n+1}-X_n))
	\quad \mbox{and} \quad
	\sum_{n=1}^\infty \corr(X_1,\, (X_{n+1}-X_n))
\]
(all terms of the series are positive, so we omit an ``absolute value'' sign).

\begin{thm}
 \label{thm:mem1}
	Let assumptions  \eqref{neq:procXconds} hold true,
	and let the process $X$ be defined in \eqref{mainint}.
	Then
\begin{equation}\label{eq:mem1-asy}
	\ME [X_1 \, (X_{t+1} - X_t)] \sim
	\frac{\mathrm{B}(2\alpha{+}1, \, \gamma{+}1)}
	{2\alpha{+}\beta{+}\gamma{+}2} \,
	t^{\beta+\gamma}
	\quad \mbox{as} \quad t\to{+}\infty .
\end{equation}
	If $\beta+\gamma <-1$, then
\[
	\sum_{n=1}^\infty \cov(X_1,\, (X_{n+1}-X_n)) < \infty
\quad \mbox{and} \quad
	\int_0^\infty \cov(X_1,\, (X_{t+1}-X_t)) \, dt
	< \infty,
\]
and thus the process $X$ is short-range dependent.
Otherwise, if $\beta+\gamma\ge -1$, then
\[
	\sum_{n=1}^\infty \cov(X_1,\, (X_{n+1}-X_n)) = \infty
\quad \mbox{and} \quad
	\int_1^\infty \cov(X_1,\, (X_{t+1}-X_t)) \, dt
	= \infty,
\]
and the process $X$ is long-range dependent.
\end{thm}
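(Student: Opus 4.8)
The plan is to start from the covariance representation \eqref{eq:invrcov}, taken with $t_1=0$, $t_2=1$, $t_3=t$, $t_4=t+1$. Since $X_0=0$ and the process is centered, $\cov(X_1,\,X_{t+1}-X_t)=\ME[X_1\,(X_{t+1}-X_t)]$; and for $t\ge 1$ the minima and maxima in \eqref{eq:invrcov} simplify, because $1\wedge(t+1)=1$ and, for $s\in[0,1]$, one has $s\vee 0=s$ and $s\vee t=t$. This yields the product form
\[
\ME[X_1\,(X_{t+1}-X_t)]=\int_0^1 s^{2\alpha}\,\Bigl(\int_s^1 u^\beta(u-s)^\gamma\,du\Bigr)\,\Bigl(\int_t^{t+1}v^\beta(v-s)^\gamma\,dv\Bigr)\,ds,\qquad t\ge 1.
\]

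First I would compute the constant $c:=\int_0^1 s^{2\alpha}\bigl(\int_s^1 u^\beta(u-s)^\gamma\,du\bigr)\,ds$. Using Tonelli's theorem (the integrand is nonnegative) to interchange the order of integration over $\{0\le s\le u\le 1\}$ gives $c=\int_0^1 u^\beta\bigl(\int_0^u s^{2\alpha}(u-s)^\gamma\,ds\bigr)\,du$; the substitution $s=ux$ evaluates the inner integral as $u^{2\alpha+\gamma+1}\,\mathrm{B}(2\alpha+1,\gamma+1)$, which is finite since $\alpha>-\tfrac12$ and $\gamma>-1$. Then
\[
c=\mathrm{B}(2\alpha+1,\gamma+1)\int_0^1 u^{2\alpha+\beta+\gamma+1}\,du=\frac{\mathrm{B}(2\alpha+1,\gamma+1)}{2\alpha+\beta+\gamma+2},
\]
the last integral converging because $2\alpha+\beta+\gamma+2=\alpha+(\alpha+\beta+\gamma)+2>0$ by \eqref{neq:procXconds}; note also $c>0$.

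Next, for \eqref{eq:mem1-asy}, I would substitute $v=t+w$ in the inner $v$-integral,
\[
\int_t^{t+1}v^\beta(v-s)^\gamma\,dv=t^{\beta+\gamma}\int_0^1\bigl(1+\tfrac{w}{t}\bigr)^{\!\beta}\bigl(1+\tfrac{w-s}{t}\bigr)^{\!\gamma}dw,
\]
and observe that as $t\to\infty$ the integrand converges to $1$ uniformly over $w\in[0,1]$ and $s\in[0,1]$, so that $t^{-(\beta+\gamma)}\int_t^{t+1}v^\beta(v-s)^\gamma\,dv\to1$ uniformly in $s\in[0,1]$. Multiplying by $s^{2\alpha}\int_s^1 u^\beta(u-s)^\gamma\,du$, integrating over $s\in[0,1]$, and using $c<\infty$, one obtains $\ME[X_1(X_{t+1}-X_t)]\sim c\,t^{\beta+\gamma}$, which is \eqref{eq:mem1-asy}.

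Finally, because every $\cov(X_1,\,X_{n+1}-X_n)$ is positive (by \eqref{eq:invrcov}) and asymptotically equals $c\,n^{\beta+\gamma}$ with $c>0$, the limit comparison test equates the convergence of $\sum_n\cov(X_1,X_{n+1}-X_n)$ with that of $\sum_n n^{\beta+\gamma}$, which holds if and only if $\beta+\gamma<-1$; likewise $\int_1^\infty\cov(X_1,X_{t+1}-X_t)\,dt$ is finite if and only if $\int_1^\infty t^{\beta+\gamma}\,dt$ is, namely if and only if $\beta+\gamma<-1$, and in that case $\int_0^1\cov(X_1,X_{t+1}-X_t)\,dt<\infty$ as well, since the covariance function is continuous, hence bounded, on $[0,1]$. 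The short- and long-range dependence conclusions then follow from Definition~\ref{def:shlong-dep-i1}. The only slightly delicate point is the uniformity in $s$ of the limit $t^{-(\beta+\gamma)}\int_t^{t+1}v^\beta(v-s)^\gamma\,dv\to1$, which is what lets us pull $t^{\beta+\gamma}$ out from under the $s$-integral; but since $s$ ranges over the bounded interval $[0,1]$ this is routine.
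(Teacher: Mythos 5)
Your proposal is correct and follows essentially the same route as the paper's proof: the same specialization of \eqref{eq:invrcov} to $t_1=0$, $t_2=1$, $t_3=t$, $t_4=t+1$, the same uniform-in-$s$ limit $t^{-\beta-\gamma}\int_t^{t+1}v^\beta(v-s)^\gamma\,dv\to1$, the same Fubini/beta-function evaluation of the limiting constant, and the same comparison with $t^{\beta+\gamma}$ for the series and integral (the paper justifies the boundedness near $t=0$ via mean-square continuity of $X$, matching your continuity remark).
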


\begin{proof}
Due to \eqref{eq:invrcov}, for all $t\ge 1$
\begin{equation}\label{eq:invrcov-1m}
	\ME[X_1 (X_{t+1}-X_{t})] =
	\int_0^1 s^{2\alpha}
	\biggl(\int_{s}^{1} u^\beta (u{-}s)^\gamma \, du\biggr)
	\biggl(\int_{t}^{t+1} v^\beta (v{-}s)^\gamma \, dv \biggr) \, ds.
\end{equation}
Obviously, we have uniform convergence
\[
	\frac{(t+x)^\beta (t+x-s)^\gamma}{t^{\beta+\gamma}}
	= \left(1+\frac{x}{t}\right)^\beta
	  \left(1+\frac{x{-}s}{t}\right)^{\!\gamma} \to 1
	\quad \mbox{as} \quad
	t\to{+}\infty,\quad
	x,s\in[0,1].
\]
Then
\begin{equation}\label{eq:temp316}
	t^{-\beta-\gamma} \int_t^{t+1} v^\beta (v{-}s)^\gamma \, dv
	= \int_0^1
	\frac{(t+x)^\beta (t+x-s)^\gamma}{t^{\beta+\gamma}} \, dx
	\to 1
\end{equation}
uniformly in $s\in[0,1]$ as $t\to+\infty$.
Due to \eqref{eq:invrcov-1m} and \eqref{eq:temp316},
\begin{align*}
	\MoveEqLeft t^{-\beta-\gamma} \ME[X_1 \, (X_{t+1}-X_t)]
\\
	&=
	\int_0^1 s^{2\alpha}
	\biggl(\int_{s}^{1} u^\beta (u{-}s)^\gamma \, du\biggr)
	\biggl(t^{-\beta-\gamma} \int_{t}^{t+1} v^\beta (v{-}s)^\gamma
	\, dv \biggr) \, ds
	\\
	&\to
	\int_0^1 s^{2\alpha}
	\biggl(\int_{s}^{1} u^\beta (u{-}s)^\gamma \, du\biggr)
	\, ds
	=
	\int_0^1 u^{\beta} \int_0^u s^{2\alpha} (u-s)^\gamma \, ds \, du
	\\
	&=
	\int_0^1 u^{\beta + 2\alpha + \gamma + 1}
	\mathrm{B}(2\alpha{+}1,\, \gamma{+}1) \, du
	= \frac{\mathrm{B}(2\alpha{+}1,\, \gamma{+}1)}{2\alpha+\beta+\gamma+2}
\end{align*}
as $t\to +\infty$, which is equivalent to \eqref{eq:mem1-asy}.

The convergence of the integrals and series
follows from well-known criteria.
We use the fact that, since the process $\{X_t,\; t\ge 0\}$
is mean-square continuous,
the covariance
$\cov(X_1,\, (X_{t+1}-X_t)) = \ME [X_1\, (X_{t+1}-X_t)]$
is continuous in $t$ on $[0,+\infty)$.
\end{proof}

\begin{lemma}\label{lem:incpcovas}
	Let conditions \eqref{neq:procXconds} hold true,
	and let the process $X$ be defined in \eqref{mainint}.
The asymptotics of the incremental variance $\ME(X_{t+1}-X_t)^2$
as $t\to+\infty$ is the following:
\begin{eqnarray*}
	&	\ME(X_{t+1}-X_t)^2 \sim
	\frac{\mathrm{B}(\gamma{+}1, {-}2\gamma{-}1) \, t^{2\alpha+2\beta}}
	{(\gamma+1) (2\gamma+3)}
	& \mbox{if $\gamma<{-}\frac12$}, \\
	& \ME(X_{t+1}-X_t)^2 \sim
	t^{2\alpha+2\beta} \log t
	& \mbox{if $\gamma={-}\frac12$}, \\
	& \ME(X_{t+1}-X_t)^2 \sim
	\mathrm{B}(2\alpha{+}1, 2\gamma{+}1)\,
	t^{2\alpha+2\beta+2\gamma+1}
	& \mbox{if $\gamma>{-}\frac12$}.
\end{eqnarray*}
\end{lemma}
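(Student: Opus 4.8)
My proof would start from the covariance identity \eqref{eq:invrcov} taken at $t_1=t_3=t$ and $t_2=t_4=t+1$:
\[
\ME(X_{t+1}-X_t)^2 = \int_0^{t+1} s^{2\alpha}\Bigl(\int_{s\vee t}^{t+1} u^\beta (u-s)^\gamma\,du\Bigr)^2 ds .
\]
Since $u\in[s\vee t,\,t+1]\subseteq[t,t+1]$ throughout, one has $u^\beta=t^\beta(1+\theta_{u,t})$ with $|\theta_{u,t}|\le\varepsilon_t:=|(1+t^{-1})^\beta-1|\to0$, uniformly in $s$; pulling $t^\beta$ out of the inner integral and of its square reduces the problem to the deterministic asymptotics $\ME(X_{t+1}-X_t)^2\sim t^{2\beta}G(t)$, where
\[
G(t):=\int_0^{t+1} s^{2\alpha}\Bigl(\int_{s\vee t}^{t+1}(u-s)^\gamma\,du\Bigr)^{\!2} ds=\frac{1}{(\gamma+1)^2}\int_0^{t+1} s^{2\alpha}\Bigl[(t+1-s)^{\gamma+1}-\bigl((t-s)^+\bigr)^{\gamma+1}\Bigr]^2 ds .
\]
Splitting $G(t)$ at $s=t$: the part over $(t,t+1]$ equals $(\gamma+1)^{-2}\int_t^{t+1}s^{2\alpha}(t+1-s)^{2\gamma+2}\,ds\sim (\gamma+1)^{-2}(2\gamma+3)^{-1}t^{2\alpha}$ (finite because $\gamma>-\tfrac32$), hence is $O(t^{2\alpha})$; in the part over $[0,t]$ I substitute $y=t-s$, obtaining $(\gamma+1)^{-2}\int_0^t(t-y)^{2\alpha}h(y)^2\,dy$ with $h(y):=(y+1)^{\gamma+1}-y^{\gamma+1}$, $h(0)=1$, $h(y)^2\sim(\gamma+1)^2y^{2\gamma}$ as $y\to\infty$.

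The three regimes reflect the integrability of $y^{2\gamma}$ at infinity. If $\gamma>-\tfrac12$, I rescale $y=t\eta$ to write $\int_0^t(t-y)^{2\alpha}h(y)^2\,dy=t^{2\alpha+2\gamma+1}\int_0^1(1-\eta)^{2\alpha}\bigl[t^{-2\gamma}h(t\eta)^2\bigr]d\eta$, and since $t^{-2\gamma}h(t\eta)^2\to(\gamma+1)^2\eta^{2\gamma}$ pointwise with an $L^1([0,1])$ dominating function, dominated convergence gives $\int_0^t(t-y)^{2\alpha}h(y)^2\,dy\sim(\gamma+1)^2\mathrm{B}(2\alpha+1,2\gamma+1)\,t^{2\alpha+2\gamma+1}$, so $G(t)\sim\mathrm{B}(2\alpha+1,2\gamma+1)\,t^{2\alpha+2\gamma+1}$, which yields the third assertion. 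If $\gamma=-\tfrac12$, then $h(y)^2=\bigl((y+1)^{1/2}+y^{1/2}\bigr)^{-2}=\tfrac1{4y}+O(y^{-2})$, hence $\int_0^t h(y)^2\,dy=\tfrac14\log t+O(1)$, and a sandwiching argument (cutting the range at $y=t^{1-\delta}$ and $y=t/2$) shows that the slowly varying factor $(t-y)^{2\alpha}=t^{2\alpha}(1-y/t)^{2\alpha}$ affects neither the rate nor the constant, i.e.\ $\int_0^t(t-y)^{2\alpha}h(y)^2\,dy\sim\tfrac14 t^{2\alpha}\log t$; as $(\gamma+1)^{-2}=4$ here, $G(t)\sim t^{2\alpha}\log t$, giving the second assertion.

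For $\gamma<-\tfrac12$ this route also works but makes the constant awkward, so instead I would read it off from the two-variable form: Fubini applied to \eqref{eq:invrcov} with $t_1=t_3=t$, $t_2=t_4=t+1$ gives
\[
\ME(X_{t+1}-X_t)^2=\int_t^{t+1}\!\!\int_t^{t+1}u^\beta v^\beta\Bigl(\int_0^{u\wedge v}s^{2\alpha}(u-s)^\gamma(v-s)^\gamma\,ds\Bigr)du\,dv ,
\]
and, after factoring $u^\beta v^\beta=t^{2\beta}(1+o(1))$, the substitution $s=(u\wedge v)-r$ turns the inner integral into $\int_0^{u\wedge v}\bigl((u\wedge v)-r\bigr)^{2\alpha}r^\gamma\bigl(r+|u-v|\bigr)^\gamma dr$. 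Because $2\gamma<-1$ this concentrates on bounded $r$, so it is asymptotically $t^{2\alpha}\int_0^\infty r^\gamma\bigl(r+|u-v|\bigr)^\gamma dr=t^{2\alpha}|u-v|^{2\gamma+1}\int_0^\infty\rho^\gamma(1+\rho)^\gamma\,d\rho=t^{2\alpha}|u-v|^{2\gamma+1}\,\mathrm{B}(\gamma+1,-2\gamma-1)$, the last Beta integral converging exactly when $-2\gamma-1>0$. Inserting this and using $\int_0^1\!\int_0^1|u-v|^p\,du\,dv=\tfrac{2}{(p+1)(p+2)}$ with $p=2\gamma+1$ gives $\ME(X_{t+1}-X_t)^2\sim\dfrac{\mathrm{B}(\gamma+1,-2\gamma-1)}{(\gamma+1)(2\gamma+3)}\,t^{2\alpha+2\beta}$.

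The algebra here is routine; the real work is in the limit passages. For $\gamma<-\tfrac12$ one must produce a dominating function for the outer $(u,v)$-integral that is integrable on $[t,t+1]^2$ with the (still integrable) singularity $|u-v|^{2\gamma+1}$ and carries exactly the power $t^{2\alpha}$ — the naive crude bounds give the wrong exponent — while also coping with the fact that the inner $s$-integral is infinite along the diagonal $u=v$. In the critical case $\gamma=-\tfrac12$, the delicate point is to confirm that multiplying the logarithmically divergent density $h(y)^2\sim\tfrac1{4y}$ by the slowly varying factor $(1-y/t)^{2\alpha}$ changes neither the $\log t$ rate nor its coefficient; this is where I expect the main obstacle to lie.
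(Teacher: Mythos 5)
Your proof is correct, but it takes a genuinely different route from the paper's. The paper settles the lemma in two lines: by self-similarity with exponent $\alpha+\beta+\gamma+\frac32$ (Proposition~1 of \cite{Part1}) it writes $\ME(X_{t+1}-X_t)^2=t^{2\alpha+2\beta+2\gamma+3}\,\ME(X_{1+1/t}-X_1)^2$ and then quotes the small-increment asymptotics of $\ME(X_{1+\delta}-X_1)^2$ as $\delta\downarrow 0$ from Proposition~2 of \cite{Part1}, so the three regimes in $\gamma$ are simply inherited from that external result (incidentally, the paper's displayed exponent $2\alpha+2\beta+2\gamma-3$ is a typo for $+3$). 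You never invoke self-similarity and instead extract the large-$t$ asymptotics directly from the covariance representation \eqref{eq:invrcov}, which makes the proof self-contained and in effect re-derives the content of \cite[Proposition~2]{Part1} in rescaled form. The limit passages you flag as "the real work" do go through as you sketch them: for $\gamma>-\frac12$ the elementary bounds $h(y)\le(\gamma+1)y^{\gamma}$ (concavity, $\gamma\le 0$) and $h(y)\le(\gamma+1)(y+1)^{\gamma}$ ($\gamma\ge 0$) give an integrable dominating function after the rescaling $y=t\eta$; in the critical case the $\delta$-sandwich works because on $[0,t^{1-\delta}]$ the factor $(1-y/t)^{2\alpha}\to 1$ uniformly while the ranges $[t^{1-\delta},t/2]$ and $[t/2,t]$ contribute at most $C\delta\,t^{2\alpha}\log t$ and $O(t^{2\alpha})$ respectively, so letting $\delta\downarrow 0$ pins the constant at $\frac14$; and for $\gamma<-\frac12$ the domination is obtained by splitting the inner $r$-integral at $(u\wedge v)/2$: extending the near part to $r\in(0,\infty)$ bounds it by $C\,t^{2\alpha}|u-v|^{2\gamma+1}\mathrm{B}(\gamma+1,-2\gamma-1)$, the far part is at most $C\,t^{2\alpha+2\gamma+1}=o(t^{2\alpha})$, and $|u-v|^{2\gamma+1}$ is integrable over the unit square since $2\gamma+1>-1$; with $\int_0^1\!\int_0^1|x-y|^{2\gamma+1}\,dx\,dy=\frac{1}{(\gamma+1)(2\gamma+3)}$ your constants agree with the paper's in all three cases. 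What each approach buys: the paper's reduction is shorter and reuses known local (small-increment) behaviour at a fixed point, but hides all the analysis in a citation; your computation is longer but elementary, independent of \cite{Part1}, and shows explicitly where each regime boundary and each constant comes from.
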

\begin{proof}
The process $X$ is self-similar
with exponent $\alpha+\beta+\gamma+\frac32$
according to \cite[Proposition~1]{Part1}.
Hence
\[
	\ME (X_{t+1}-X_t)^2 =
	t^{2\alpha+2\beta+2\gamma-3} \ME (X_{1+1/t} - X_1)^2.
\]
We apply \cite[Proposition~2]{Part1} for the asymptotics of
$\ME (X_{1+1/t} - X_1)^2$, and we immediately obtain
the conclusion of Lemma~\ref{lem:incpcovas}.
\end{proof}

Now we consider the asymptotic behaviour of the correlation
\[
	\corr(X_1,\, (X_{t+1}{-}X_t)) =
	\frac{\ME [X_1\, (X_{t+1}{-}X_t)]}
	{\sqrt{\ME X_1^2} \sqrt{\ME (X_{t+1} {-} X_t)^2}}.
\]
\begin{thm}
 \label{thm:asypcorr}
	Let conditions \eqref{neq:procXconds} hold true,
	and let the process $X$ be defined in \eqref{mainint}.
Then the asymptotics of the correlation $\corr(X_1,\, (X_{t+1}-X_t))$ is
\begin{eqnarray}
	& \corr(X_1,\, (X_{t+1}{-}X_t)) \sim
	c \, t^{\gamma - \alpha}
	& \mbox{if $\gamma<-\frac12$},
	\label{eq:prasypcorr-1}
	\\
	& \corr(X_1,\, (X_{t+1}{-}X_t)) \sim
	c \, t^{-0.5 - \alpha} (\log t)^{-1/2}
	& \mbox{if $\gamma=-\frac12$},
	\label{eq:prasypcorr-2}
	\\
	& \corr(X_1,\, (X_{t+1}{-}X_t)) \sim
	c \, t^{-0.5 - \alpha}
	& \mbox{if $\gamma>-\frac12$},
	\label{eq:prasypcorr-3}
\end{eqnarray}
where the factor $c$ depends on $\alpha$, $\beta$ and $\gamma$.

As a consequence,
\begin{equation}\label{neq:prasypcoss-4}
	\sum_{n=1}^\infty \corr(X_1,\, (X_{n+1}-X_n)) < \infty
\quad \mbox{and} \quad
	\int_0^\infty \corr(X_1,\, (X_{t+1}-X_t)) \, dt
	< \infty
\end{equation}
if $\min(\gamma,-\frac12)<\alpha-1$, and otherwise
\begin{equation}\label{neq:prasypcoss-5}
	\sum_{n=1}^\infty \corr(X_1,\, (X_{n+1}-X_n)) = \infty
\quad \mbox{and} \quad
	\int_1^\infty \corr(X_1,\, (X_{t+1}-X_t)) \, dt
	= \infty
\end{equation}
if $\alpha\le \frac12$ and $\gamma\ge \alpha-1$.
\end{thm}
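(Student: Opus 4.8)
The plan is to obtain the correlation asymptotics by dividing the numerator asymptotics of Theorem~\ref{thm:mem1} by the denominator asymptotics of Lemma~\ref{lem:incpcovas}, and then to settle the convergence questions by a comparison/integral test.

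First I would write
\[
\corr(X_1,\, X_{t+1}-X_t) = \frac{\ME[X_1(X_{t+1}-X_t)]}{\sqrt{\ME X_1^2}\,\sqrt{\ME(X_{t+1}-X_t)^2}},
\]
where $\sqrt{\ME X_1^2}$ is a fixed positive constant. Taking square roots in the three regimes of Lemma~\ref{lem:incpcovas} gives $\sqrt{\ME(X_{t+1}-X_t)^2}$ asymptotically equal to a positive constant times $t^{\alpha+\beta}$ if $\gamma<-\tfrac12$, to $t^{\alpha+\beta}(\log t)^{1/2}$ if $\gamma=-\tfrac12$, and to a positive constant times $t^{\alpha+\beta+\gamma+1/2}$ if $\gamma>-\tfrac12$. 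Dividing $\ME[X_1(X_{t+1}-X_t)]\sim \frac{\mathrm{B}(2\alpha+1,\gamma+1)}{2\alpha+\beta+\gamma+2}\,t^{\beta+\gamma}$ by the product of these factors yields \eqref{eq:prasypcorr-1}--\eqref{eq:prasypcorr-3}, with $c$ the ratio of the corresponding leading coefficients (so $c$ is a known function of $\alpha,\beta,\gamma$ built from Beta values). This step is routine.

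Next, for the series and the integral I would unify the three regimes by noting that
\[
\corr(X_1,\, X_{t+1}-X_t) \sim c\, t^{\min(\gamma,-1/2)-\alpha}\,\ell(t),
\]
where $\ell(t)\equiv 1$ except at the boundary $\gamma=-\tfrac12$, where $\ell(t)=(\log t)^{-1/2}$. Since $t\mapsto\corr(X_1,X_{t+1}-X_t)$ is continuous on $[0,\infty)$ by mean-square continuity of $X$ and strictly positive by \eqref{eq:invrcov}, the sum $\sum_n$ and the integral $\int^\infty$ converge or diverge together, and this is governed by the exponent: both are finite exactly when $\min(\gamma,-1/2)-\alpha<-1$, and both are infinite when $\min(\gamma,-1/2)-\alpha\ge-1$. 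Finally I would record the elementary equivalence that, for real $\alpha,\gamma$, the inequality $\min(\gamma,-\tfrac12)<\alpha-1$ holds if and only if it is not the case that $\alpha\le\tfrac12$ and $\gamma\ge\alpha-1$ (if $\min(\gamma,-\tfrac12)\ge\alpha-1$ then $\alpha-1\le-\tfrac12$ and $\alpha-1\le\gamma$; conversely $\alpha\le\tfrac12$ with $\gamma\ge\alpha-1$ gives $\alpha-1\le\min(\gamma,-\tfrac12)$), which converts the exponent condition into \eqref{neq:prasypcoss-4}--\eqref{neq:prasypcoss-5}.

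The only delicate point is the borderline exponent $\min(\gamma,-1/2)-\alpha=-1$ (for instance $\alpha=\tfrac12$ with $\gamma\ge-\tfrac12$, or $\gamma<-\tfrac12$ with $\alpha=\gamma+1$): there one must verify that the logarithmic correction $\ell(t)$ appearing when $\gamma=-\tfrac12$ is too weak to restore convergence, which follows from $\int_e^\infty \frac{dt}{t(\log t)^{1/2}}=\infty$ since the exponent $\tfrac12$ is less than $1$. Everything else is bookkeeping with the already-established asymptotics.
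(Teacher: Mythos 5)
Your proposal is correct and follows essentially the same route as the paper: the asymptotics \eqref{eq:prasypcorr-1}--\eqref{eq:prasypcorr-3} are obtained by dividing the covariance asymptotics of Theorem~\ref{thm:mem1} by the square root of the incremental variance from Lemma~\ref{lem:incpcovas}, the three regimes are unified via the exponent $\min(\gamma,-\tfrac12)-\alpha$, and the borderline logarithmic case is settled exactly as in the paper using $\int^\infty t^{-1}(\log t)^{-1/2}\,dt=\infty$. Your explicit verification that $\min(\gamma,-\tfrac12)\ge\alpha-1$ is equivalent to $\alpha\le\tfrac12$ and $\gamma\ge\alpha-1$ is a small welcome addition, but no gap remains either way.
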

\begin{proof}
Relations \eqref{eq:prasypcorr-1} to \eqref{eq:prasypcorr-3}
hold true due to Theorem~\ref{thm:mem1} and Lemma~\ref{lem:incpcovas}.
Thus, either
\begin{gather*}
\corr(X_1,\, (X_{t+1}{-}X_t)) \sim c \, t^{\min(\gamma,-0.5) - \alpha}
\shortintertext{or}
\corr(X_1,\, (X_{t+1}{-}X_t)) \sim c \, t^{\min(\gamma,-0.5) - \alpha}
(\log t)^{-0.5}
\end{gather*}
as $t\to{+}\infty$.
If $\min(\gamma,-0.5)-\alpha < -1$,
the series and the integral in \eqref{neq:prasypcoss-4} are convergent.
If $\min(\gamma,-0.5)-\alpha \ge -1$
(which is equivalent to $\alpha\le\frac12$ \textrm{and}
$\lambda\ge\alpha-1$),
the series and the integral in \eqref{neq:prasypcoss-5} are divergent.
The case where
$\corr(X_t,\, (X_{t+1} - X_t)) \sim c\, t^{-1} (\log t)^{-1/2}$
may need additional explanation. Since
\[
\int t^{-1} (\log t)^{-1/2} \, dt = 2 \sqrt{\log t} + C, \qquad
\int_2^\infty t^{-1} (\log t)^{-1/2} \, dt = \infty,
\]
the integral and the series in \eqref{neq:prasypcoss-5}
are divergent as claimed.
\end{proof}

\begin{remark}
Recall that a  fractional Brownian motion $B^H$ with Hurst index $H\in (0,1)$
is a self-similar zero-mean Gaussian process with stationary increments.
 It can be  normalized in such way that
the unit-length increments have variance equal to 1:
\[
\ME (B^H_1)^2 = \ME (B^H_{t+1}-B^H_t)^2 = 1,
\]
which implies
\[
\cov(B^H_1,\, B^H_{t+1}-B^H_t) =
\corr(B^H_1,\, B^H_{t+1}-B^H_t)
\quad \mbox{for all $t$}.
\]
If $H=0$ or $H=\frac12$, the increments of $B^H$ over non-touching
non-overlapping intervals are independent\footnote{For $H=0$, the process $B^H$ is defined in \cite{bnmz2017}.}.
The process $B^H$ is a process with negatively correlated increments and with short memory if $H\in\bigl (0, \frac12\bigr)$, and
it is a process with positively correlated increments and with long memory if $H\in\bigl (\frac12, 1\bigr]$.
 As to the exponents in the representation \eqref{mainint},
$\beta+\gamma=2H-2>-1$,  $\alpha=\frac12-H \in \bigl(-\frac12, 0\bigr)$
(whence $\alpha\le\frac12$ formally holds true), and
$\gamma=H-\frac32>-1 >\alpha-1$.
These statements are consistent with Theorems
\ref{thm:mem1} and \ref{thm:asypcorr}.
\end{remark}

\begin{remark}
One may ask the following question: do the increments $X_{t_0+t}-X_{t_0}$ approach fBm $B^H(t)$ as $t_0$ tends to infinity?
It turns out that this depends on the values of the parameters $\alpha$, $\beta$ and $\gamma$.
The incremental variance $\var\left (X_{t_0+t}-X_{t_0}\right )$ has non-zero finite limit,
if and only if one of the following conditions is satisfied:
\begin{enumerate}
\item[(a)] \quad $\alpha+\beta =0$ and $\gamma < -1/2$, or
\item[(b)] \quad $\alpha+\beta+\gamma=-1/2$ and $\gamma>-1/2$.
\end{enumerate}

In case (a) the process $\{X_{t_0+t}-X_{t_0},\allowbreak \;
t\in[0,T]\}$
tends to a fractional Brownian motion $c B^H$
with Hurst index $H=\gamma+\frac32$ and $c = (\mathrm{B}(\gamma+1, \: -2\gamma-1) / ((\gamma+1)(2\gamma+1)))^{1/2}$:
\begin{gather*}
  \var\left (X_{t_0+t}-X_{t_0}\right ) \to c^2 \, |t|^{2H} \quad \mbox{as} \quad
  t_0\to+\infty, \\
  \cov\left (X_{t_0+t}-X_{t_0},\: X_{t_0+h+t}-X_{t_0}\right ) \to
  \frac{c^2 (|t|^{2H} + |t+h|^{2H} - |h|^{2H})}{2}
\end{gather*}
as $t_0\to+\infty$.

In case (b) the process 
\[
X_t = \int_0^t s^\alpha \int_s^t u^\beta (u-s)^\gamma du\,dW_s
 = \int_0^t u^\beta \int_0^u  s^\alpha (u-s)^\gamma dW_s\,du
\]
is differentiable a.s., and 
\[
\dot X_t = t^\beta \int_0^t  z^\alpha (t-z)^\gamma dW_z.
\]
Let $s>t$. Then
\[
\ME\dot X_s \dot X_t = s^\beta t^\beta \int_0^t  z^{2\alpha} (t-z)^\gamma (s-z)^\gamma dz,
\]
and consequently, for $y>x$
\begin{align*}
\ME \left(X_y - X_x\right)^2
&= \int_x^y\!\!\int_x^y \ME\dot X_u \dot X_v \,du\,dv
= 2\int_x^y\!\!\int_x^v \ME\dot X_u \dot X_v \,du\,dv
\\
&= 2\int_x^y\!\!\int_x^v u^\beta v^\beta \int_0^u  z^{2\alpha} (u-z)^\gamma (v-z)^\gamma dz \,du\,dv.
\end{align*}
Now, assume that $x\to\infty$ and $y=x+h$ for some fixed $h>0$ (the case $h<0$ can be considered similarly).
Then
\begin{equation}\label{eq:no}
\begin{split}
\ME \left(X_{x+h} - X_x\right)^2
&= 2\int_x^{x+h}\!\!\int_x^v u^\beta v^\beta \int_0^u  z^{2\alpha} (u-z)^\gamma (v-z)^\gamma dz \,du\,dv
\\
&= 2\int_x^{x+h}\!\!\int_x^v u^\beta v^\beta u^{2\alpha+2\gamma+1}\int_0^1  t^{2\alpha} \left (\tfrac{v}{u}-t\right )^\gamma (1-t)^\gamma dt \,du\,dv
\\
&= 2x^2\int_1^{1+h/x}\!\!\int_1^r p^{2\alpha+2\gamma+\beta+1} r^\beta \int_0^1  t^{2\alpha} \left (\tfrac{r}{p}-t\right )^\gamma (1-t)^\gamma dt \,dp\,dr.
\end{split}
\end{equation}

Let $x\to\infty$. Then $\frac{r}{p}\downarrow1$,
\[
\int_0^1  t^{2\alpha} \left (\tfrac{r}{p}-t\right )^\gamma (1-t)^\gamma dt \to B(2\alpha+1,2\gamma+1).
\]
Therefore, the last integral in \eqref{eq:no} has the following asymptotic behavior
\begin{align*}
\MoveEqLeft
B(2\alpha+1,2\gamma+1)\cdot 2x^2\int_1^{1+h/x}\!\!\int_1^r p^{-\beta} r^\beta \,dp\,dr
\\*
&=B(2\alpha+1,2\gamma+1) \frac{2x^2}{1-\beta}\int_1^{1+h/x}\left(r^{1-\beta}-1\right) r^\beta \,dr
\\
&=B(2\alpha+1,2\gamma+1) \frac{2x^2}{1-\beta}\int_1^{1+h/x}\left(r- r^\beta\right)dr
\\
&=B(2\alpha+1,2\gamma+1) \frac{2x^2}{1-\beta}\left(\frac12\left (1+\frac{h}{x}\right )^2 -\frac12-\frac{(1+\frac{h}{x})^{\beta+1}-1}{1+\beta}\right)
\\
&=B(2\alpha+1,2\gamma+1) \frac{2x^2}{1-\beta}\left(\frac12\frac{h^2}{x^2}-\frac\beta2\frac{h^2}{x^2}+o\left (\frac{1}{x^2}\right )\right)
\\
&\to B(2\alpha+1,2\gamma+1)h^2, \quad x\to\infty.
\end{align*}
Thus, the limit process can be tractable (up to a constant) as a fractional Brownian motion with $H=1$, i.\,e., as $B^1_t = t\xi$ with $\xi\sim\mathcal N(0,1)$.
\end{remark}

\section{Drift parameter estimation in the Ornstein--\newline Uhlenbeck process driven by Gaussian Volterra process}\label{s:4}

Let inequalities \eqref{neq:procXconds} hold true, and let the process $X$ be defined in \eqref{mainint}.
In this section we consider the estimation of the unknown parameter $\theta>0$ by observations of the process $Z=\{Z_t,t\geq 0\}$ that is a solution of the following stochastic differential equation of Langevin type
\begin{equation}\label{eq:equation}
Z_t=\theta\int_0^tZ_s\,ds+X_t.
\end{equation}

In what follows we make use of the following two properties of the process $X$. Firstly, recall that according to \cite[Thm.\ 6]{Part1},
\begin{enumerate}[(H1)]
\item \label{(H1)}
the process $X$ has a continuous modification that is H\"older continuous up to order
$\lambda=\min\left (\alpha+\beta+\gamma+\frac32, \gamma+\frac32, 1\right )$.
\end{enumerate}
In what follows we shall consider this modification.
Secondly, according to \cite[Prop.\ 1]{Part1}, the process $X$ is self-similar with exponent $\alpha+\beta+\gamma+\frac32$, hence,
\begin{enumerate}[resume*]
\item \label{(H2)} for every $t \ge 0$, $\ME X^2_t = C t^{2\alpha+2\beta+2\gamma+3}$, where $C=\ME X_1^2<\infty$.
\end{enumerate}

One can verify that the solution of~\eqref{eq:equation} can be  represented as
\begin{equation}\label{eq:solution}
Z_t=e^{\theta t}\int_0^te^{-\theta s}\,dX_s, \quad t\ge0.
\end{equation}
Due to the H\"older continuity of $X$ and the Lipschitz continuity
of the function $s\mapsto e^{-\theta s}$, the integral with respect to $X$ in \eqref{eq:solution} exists
as the pathwise Riemann--Stieltjes integral.

Let, more precisely, our goal be to estimate the unknown drift parameter $\theta>0$ by the continuous-time observations on the interval $[0,T]$.
Consider the estimator
\begin{equation}\label{eq:estimator1}
\hat\theta_T=\frac{Z_T^2}{2\int_0^TZ_s^2\,ds}.
\end{equation}

\begin{remark}\label{rem:holder}
As mentioned above, the trajectories of the processes $X$ and consequently $Z$ are a.\,s.\ H\"older continuous up to order $\lambda$.
Therefore, if $\lambda>\frac12$, then the path-wise integral $\int_0^TZ_s\,dZ_s$ is well defined as Young's integral and equals $Z_T^2/2$, hence, $\hat\theta_T$ can be written as
$\hat\theta_T=\frac{\int_0^TZ_s\,dZ_s}{\int_0^TZ_s^2\,ds}$.
Hence, the $\hat\theta_T$ coincides with the least squares estimator, which  minimizes (formally) the integral $\theta\mapsto\int_0^T \left|\dot Z_s - \theta Z_s\right|^2ds$
(we refer to \cite{emeso2016} for details).
However in what follows we will consider the estimator $\hat\theta_T$ in the form \eqref{eq:estimator1}, since this form is well defined for all values of $\lambda\in(0,1]$.
\end{remark}

\begin{thm}
Let $\theta>0$.
Then the estimator $\hat\theta_T$ is strongly consistent, that is
\[
\hat\theta_T\to\theta \quad\text{a.s., as } T\to\infty.
\]
\end{thm}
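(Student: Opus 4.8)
The estimator is $\hat\theta_T = Z_T^2 / (2\int_0^T Z_s^2\,ds)$, so I would write
\[
\hat\theta_T = \theta \cdot \frac{e^{2\theta T} \bigl(\int_0^T e^{-\theta s}\,dX_s\bigr)^2 / (2\theta)}{\int_0^T Z_s^2\,ds}
\]
using the representation \eqref{eq:solution}, and aim to show that the numerator and denominator, suitably scaled by $e^{2\theta T}$, have the same a.s. limit. Set $\xi_T := \int_0^T e^{-\theta s}\,dX_s$. Since $\theta>0$, the natural guess is that $\xi_T$ converges a.s. (and in $L^2$) to a finite random variable $\xi_\infty = \int_0^\infty e^{-\theta s}\,dX_s$, with $\ME \xi_\infty^2 < \infty$; granting this, $Z_T = e^{\theta T}\xi_T \sim e^{\theta T}\xi_\infty$ and, by l'Hôpital / a Cesàro-type argument, $\int_0^T Z_s^2\,ds \sim \frac{e^{2\theta T}}{2\theta}\,\xi_\infty^2$, which forces $\hat\theta_T \to \theta$ a.s. on the event $\{\xi_\infty \neq 0\}$, an event of probability one since $\xi_\infty$ is a nondegenerate Gaussian variable.

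\textbf{Key steps, in order.} First I would make sense of $\xi_T = \int_0^T e^{-\theta s}\,dX_s$ as a pathwise Riemann--Stieltjes (Young) integral using (H1) and integrate by parts: $\xi_T = e^{-\theta T}X_T + \theta\int_0^T e^{-\theta s} X_s\,ds$. Second, I would prove $\xi_T$ converges a.s. as $T\to\infty$: the boundary term $e^{-\theta T}X_T \to 0$ a.s. because, by Theorem~\ref{thm:2.3}, $|X_T| = O(T^{\alpha+\beta+\gamma+1.5}(\log\log T)^{1/2})$ a.s., which is $o(e^{\theta T})$; and the integral $\int_0^\infty e^{-\theta s}X_s\,ds$ converges absolutely a.s. by the same polynomial-growth bound. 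Call the limit $\xi_\infty$. Third, I would note $\xi_\infty$ is Gaussian (an a.s. limit of the Gaussian variables $\xi_T$) with strictly positive variance — e.g. because $\var(\xi_T)$ is nondecreasing in $T$ and $\var(\xi_1) > 0$ (the kernel in \eqref{mainint} is not identically zero on $(0,1)$) — so $\prob(\xi_\infty = 0) = 0$. Fourth, from $Z_T = e^{\theta T}\xi_T$ I get $e^{-2\theta T}Z_T^2 \to \xi_\infty^2$ a.s., and for the denominator I apply the elementary fact that if $f(s) \to L$ as $s\to\infty$ then $e^{-2\theta T}\int_0^T e^{2\theta s} f(s)\,ds \to L/(2\theta)$ (with $f(s) = e^{-2\theta s}Z_s^2 = \xi_s^2$), giving $e^{-2\theta T}\int_0^T Z_s^2\,ds \to \xi_\infty^2/(2\theta)$ a.s. Dividing, the $\xi_\infty^2$ and $e^{2\theta T}$ factors cancel and $\hat\theta_T \to 2\theta \cdot \frac{1}{2} \cdot \frac{1}{1/\theta}$... more precisely $\hat\theta_T = \frac{e^{-2\theta T}Z_T^2}{2 e^{-2\theta T}\int_0^T Z_s^2\,ds} \to \frac{\xi_\infty^2}{2 \cdot \xi_\infty^2/(2\theta)} = \theta$ a.s.

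\textbf{Main obstacle.} The routine parts are the integration by parts and the Cesàro/l'Hôpital limit for the denominator; the one genuinely delicate point is the almost-sure convergence of $\xi_T$ together with the claim $\prob(\xi_\infty = 0) = 0$. For convergence, the growth bound from Theorem~\ref{thm:2.3} is exactly what is needed — $e^{-\theta s}|X_s|$ is a.s. integrable on $[0,\infty)$ and $e^{-\theta T}|X_T|\to 0$ — so this reduces to invoking an already-proved result. Nondegeneracy of $\xi_\infty$ requires only that $s\mapsto \var(\xi_s)$ does not vanish, which follows from \eqref{eq:invrcov} (strict positivity of the relevant covariance integral) applied on $[0,1]$; one must be slightly careful that this positivity transfers to the limit, but since $\var(\xi_s)$ is nondecreasing it suffices to check it at one point. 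I would also remark that, because $Z$ is driven by a Young integral, all stochastic-calculus manipulations here are genuinely pathwise and no Itô-type isometry is needed.
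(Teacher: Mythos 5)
Your argument is correct, but it takes a genuinely different route from the paper. The paper's own proof is essentially a citation: it invokes Theorem~2.1 of \cite{emeso2016}, checks that its hypotheses are met through the properties \ref{(H1)} and \ref{(H2)}, and adds the single observation that the implicit requirement $e^{-\theta T}X_T\to 0$ a.s.\ holds by Theorem~\ref{thm:2.3}. You instead give a self-contained pathwise proof: writing $Z_T=e^{\theta T}\xi_T$ with $\xi_T=\int_0^T e^{-\theta s}\,dX_s$ via \eqref{eq:solution}, using integration by parts plus the growth bound of Theorem~\ref{thm:2.3} to get $\xi_T\to\xi_\infty$ a.s.\ with $\prob(\xi_\infty=0)=0$, and then a deterministic l'H\^opital/Ces\`aro step for $e^{-2\theta T}\int_0^T Z_s^2\,ds\to \xi_\infty^2/(2\theta)$. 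This is in effect a reconstruction of what sits inside the cited theorem (note $\xi_\infty=\theta\eta_\infty$, and your limit for the denominator is exactly the paper's $b_T\to2$), so it buys transparency --- one sees that only H\"older continuity (for the pathwise integral), the a.s.\ growth bound, and nondegeneracy of the limit are used --- at the cost of length, while the paper buys brevity at the cost of relying on an external result whose hidden assumption it must patch. One small imprecision: you justify $\var(\xi_1)>0$ by appealing to \eqref{eq:invrcov}, which concerns covariances of increments of $X$, not of Wiener-type integrals against $X$; the clean argument is the second-moment formula (formula (14) of \cite{Part2}, used repeatedly in Section~\ref{s:4}), which gives $\var(\xi_T)=\int_0^T s^{2\alpha}\bigl(\int_s^T e^{-\theta t}t^{\beta}(t-s)^{\gamma}\,dt\bigr)^2 ds>0$ and also shows this variance is nondecreasing in $T$, so the limit Gaussian $\xi_\infty$ is indeed nondegenerate (equivalently, $\ME\eta_\infty^2>0$ by \eqref{eq:sec-mom}). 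With that substitution your proof is complete.
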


\begin{proof}
Note that $Z$ is an Ornstein--Uhlenbeck process driven by a centered Gaussian process $X$ with $X_0=0$.
The drift parameter estimation for such models was investigated in \cite{emeso2016}. In particular, the strong consistency of $\hat\theta_T$ follows immediately from \cite[Thm.~2.1]{emeso2016}, whose assumptions are satisfied due to the properties \ref{(H1)} and~\ref{(H2)}.
Note that the proof of \cite[Thm.~2.1]{emeso2016} implicitly assumes that $e^{-\theta T} X_T \to 0$ a.s.\ as $T\to\infty$ (this condition is needed to derive the convergence (2.11) in \cite{emeso2016} from (2.8) and (2.10)).
In our case this assumption also holds due to Theorem~\ref{thm:2.3}.
\end{proof}

Our next goal is to prove that $\hat \theta_T$ has a Cauchy-type asymptotic distribution.
Denote
\[
\eta_t \coloneqq \int_0^t e^{-\theta s} X_s \,ds,
\quad t\ge0.
\]
Then the random variable
$\eta_\infty \coloneqq \int_0^\infty e^{-\theta s} X_s \,ds$
is well defined, and moreover,
$\eta_t \to \eta_\infty$ as $t\to\infty$
a.\,s. and in $L_2(\Omega)$.

Let $\dto$ denote the weak convergence in distribution and let $\cauchy$ be the standard Cauchy distribution with the probability density function
$\frac{1}{\pi(1+x^2)}$, $x \in \mathbb R$.
The next theorem is the main result of this section.

\begin{thm}\label{th:cauchy}
Let $\alpha$, $\beta$ and $\gamma$ satisfy the conditions \eqref{neq:procXconds}, and let $\theta>0$.

\begin{enumerate}
\item
If $\gamma\in(-1,-\frac12)$, then
\[
\frac{e^{\theta T}}{T^{\alpha+\beta}} \left(\hat\theta_T-\theta\right )
\dto \frac{2\sigma}{\theta^{\gamma+3/2}\sqrt{\ME \eta_\infty^2}} \, \cauchy,
\quad\text{as } T\to\infty,
\]
where
$\sigma = \sigma(\gamma) = \sqrt{\frac{\Gamma(\gamma+1)\Gamma(-2\gamma-1)\Gamma(2\gamma+2)}{\Gamma(-\gamma)}}$.
\item
If $\gamma = -\frac12$, then
\[
\frac{e^{\theta T}}{T^{\alpha+\beta} \sqrt{\log T}} \left(\hat\theta_T-\theta\right )
\dto \frac{2}{\theta \sqrt{\ME \eta_\infty^2}} \, \cauchy,
\quad\text{as } T\to\infty.
\]

\item
If $\gamma > -\frac12$, then
\[
\frac{e^{\theta T}}{T^{\alpha+\beta+\gamma+1/2}} \left(\hat\theta_T-\theta\right )
\dto \frac{2\varsigma}{\theta \sqrt{\ME \eta_\infty^2}} \, \cauchy,
\quad\text{as } T\to\infty,
\]
where
$\varsigma = \varsigma(\alpha,\gamma) = \sqrt{\frac{\Gamma(2\alpha+1)\Gamma(2\gamma+1)}{\Gamma(2\alpha+2\gamma + 2)}}$.

\end{enumerate}
\end{thm}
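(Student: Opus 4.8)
The plan is to write $\hat\theta_T-\theta$ as a ratio with transparent numerator and denominator. Integrating \eqref{eq:solution} by parts gives $Z_t=X_t+\theta e^{\theta t}\eta_t$; put $A_t:=\theta e^{\theta t}\eta_t$, so that $A_0=0$, $A$ is absolutely continuous and $\dot A_t=\theta Z_t$. From \eqref{eq:estimator1} one has $\hat\theta_T-\theta=\bigl(Z_T^2-2\theta\int_0^TZ_s^2\,ds\bigr)\big/\bigl(2\int_0^TZ_s^2\,ds\bigr)$, and since $Z_T^2=X_T^2+2X_TA_T+A_T^2$ with $A_T^2=\int_0^T\tfrac{d}{ds}A_s^2\,ds=2\theta\int_0^TA_sX_s\,ds+2\theta\int_0^TA_s^2\,ds$, a computation that involves no stochastic integration yields
\[
Z_T^2-2\theta\int_0^TZ_s^2\,ds
=X_T^2-2\theta\int_0^TX_s^2\,ds+2X_TA_T-2\theta\int_0^TX_sA_s\,ds .
\]
Writing $A_s=\theta e^{\theta s}\eta_s$, replacing $\eta_s,\eta_T$ by $\eta_\infty$, and using $e^{\theta T}X_T-\theta\int_0^Te^{\theta s}X_s\,ds=\int_0^Te^{\theta s}\,dX_s=e^{\theta T}V_T$ with $V_T:=\int_0^Te^{-\theta(T-t)}\,dX_t$, we obtain $Z_T^2-2\theta\int_0^TZ_s^2\,ds=2\theta\eta_\infty e^{\theta T}V_T+\varepsilon_T$; here $\varepsilon_T$ collects $X_T^2-2\theta\int_0^TX_s^2\,ds$ together with the error from the replacement (controlled through $e^{\theta s}(\eta_s-\eta_\infty)=-\int_0^\infty e^{-\theta w}X_{s+w}\,dw$), and $\varepsilon_T=O\bigl(T^{2H+1}\log\log T\bigr)$ a.s.\ by the growth bound of Theorem~\ref{thm:2.3}, with $H:=\alpha+\beta+\gamma+\tfrac32$. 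Moreover $e^{-\theta s}Z_s\to\theta\eta_\infty$ a.s.\ (Theorem~\ref{thm:2.3}), hence $e^{-2\theta T}\cdot2\int_0^TZ_s^2\,ds\to\theta\eta_\infty^2$ a.s.\ by l'Hôpital's rule, and $\eta_\infty\ne0$ a.s.\ since $\eta_\infty$ is a non-degenerate centred Gaussian variable. Combining these,
\[
\frac{e^{\theta T}}{\phi_T}\bigl(\hat\theta_T-\theta\bigr)=\frac{2\,V_T/\phi_T}{\eta_\infty}+o_{\prob}(1),
\]
where $\phi_T\in\bigl\{T^{\alpha+\beta},\,T^{\alpha+\beta}\sqrt{\log T},\,T^{\alpha+\beta+\gamma+1/2}\bigr\}$ is the normalization of the respective item.

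So everything reduces to the limiting law of $\bigl(V_T/\phi_T,\,\eta_\infty\bigr)$. With $X_t=\int_0^tK(t,s)\,dW_s$, $K(t,s)=s^\alpha\int_s^tu^\beta(u-s)^\gamma\,du$, a stochastic Fubini argument and integration by parts give the Wiener representations
\[
V_T=\int_0^Th_T(s)\,dW_s,\quad h_T(s)=s^\alpha\!\int_s^Te^{-\theta(T-t)}t^\beta(t-s)^\gamma\,dt,\qquad
\eta_\infty=\int_0^\infty g(s)\,dW_s,
\]
with $g(s)=\tfrac{s^\alpha}{\theta}\int_s^\infty e^{-\theta u}u^\beta(u-s)^\gamma\,du\sim\tfrac{\Gamma(\gamma+1)}{\theta^{\gamma+2}}\,s^{\alpha+\beta}e^{-\theta s}$ as $s\to\infty$, so $\ME\eta_\infty^2=\int_0^\infty g^2\in(0,\infty)$. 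The asymptotics of $\var V_T=\int_0^Th_T(s)^2\,ds$ come from the substitutions $s=T-v$, $t=T-v+p$, under which $h_T(T-v)=T^\beta e^{-\theta v}\int_0^ve^{\theta p}p^\gamma\,dp\,(1+o(1))$ on bounded $v$; since $\int_0^ve^{\theta p}p^\gamma\,dp\sim\theta^{-1}e^{\theta v}v^\gamma$, the squared factor is of order $v^{2\gamma}$, integrable at $+\infty$ precisely when $\gamma<-\tfrac12$. Treating the ranges $\{v\lesssim\varepsilon T\}$ and $\{v\gtrsim\varepsilon T\}$ separately (the latter, when $\gamma\ge-\tfrac12$, producing the Beta integral $\int_0^1r^{2\alpha}(1-r)^{2\gamma}\,dr$) one gets
\[
\var V_T\sim
\begin{cases}
\theta^{-2\gamma-3}\,\sigma(\gamma)^2\,T^{2\alpha+2\beta}, & \gamma<-\tfrac12,\\
\theta^{-2}\,T^{2\alpha+2\beta}\log T, & \gamma=-\tfrac12,\\
\theta^{-2}\,\varsigma(\alpha,\gamma)^2\,T^{2\alpha+2\beta+2\gamma+1}, & \gamma>-\tfrac12,
\end{cases}
\]
so that $\sqrt{\var V_T}/\phi_T\to\sigma/\theta^{\gamma+3/2}$, $1/\theta$, $\varsigma/\theta$ respectively; verifying $\int_0^\infty\bigl(e^{-w}\int_0^we^qq^\gamma\,dq\bigr)^2dw=\sigma(\gamma)^2$ and the value of $\varsigma$ is an elementary $\mathrm B$/$\Gamma$ (equivalently, $\hyper$) computation that I would place in Appendix~\ref{s:B}.

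Finally I would show $\corr(\eta_\infty,V_T)\to0$ as $T\to\infty$. Fixing $\Lambda>0$ and noting $|h_T(s)|\lesssim s^\alpha T^{\beta+\gamma}$ for $s\in[0,\Lambda]$, Cauchy--Schwarz gives $\bigl|\int_0^\Lambda gh_T\bigr|\le C_\Lambda T^{\beta+\gamma}=o\bigl(\sqrt{\var V_T}\bigr)$ in every case (using $\alpha>-\tfrac12$, and $\gamma<\alpha$ when $\gamma<-\tfrac12$), while $\bigl|\int_\Lambda^\infty gh_T\bigr|\le\bigl(\int_\Lambda^\infty g^2\bigr)^{1/2}\sqrt{\var V_T}$; hence $\limsup_{T\to\infty}|\corr(\eta_\infty,V_T)|\le\bigl(\int_\Lambda^\infty g^2\bigr)^{1/2}/\sqrt{\ME\eta_\infty^2}\to0$ as $\Lambda\to\infty$. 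Therefore the centred Gaussian vector $\bigl(V_T/\sqrt{\var V_T},\,\eta_\infty\bigr)$ has covariance matrix converging to $\operatorname{diag}\bigl(1,\ME\eta_\infty^2\bigr)$, so it converges in law to $(\xi,N)$ with $\xi\sim\mathcal N(0,1)$ independent of $N\sim\mathcal N(0,\ME\eta_\infty^2)$; by the continuous mapping theorem and the ratio-of-normals representation of the Cauchy law, $V_T/\bigl(\eta_\infty\sqrt{\var V_T}\bigr)\dto\xi/N\deq\cauchy/\sqrt{\ME\eta_\infty^2}$, and Slutsky's theorem together with the first-paragraph reduction gives the three assertions. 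I expect the main obstacle to be the $\var V_T$ case analysis and, when $\gamma>-\tfrac12$, the bound $\corr(\eta_\infty,V_T)\to0$: in that regime $V_T$ collects comparable mass from all scales $s\asymp cT$, $c\in(0,1)$, so the vanishing correlation rests on the exponential decay of $g$ rather than on a separation of supports.
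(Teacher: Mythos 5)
Your proposal is correct in substance and reaches the same normalizations and constants as the theorem, but it follows a genuinely different route from the paper's. The paper does not re-derive the error decomposition: it imports the representation $e^{\theta T}(\hat\theta_T-\theta)=a_Tb_T+c_T$ with $b_T\to2$ a.s.\ and $c_T\to0$ in probability from \cite{emeso2016} (checking only that (H1)--(H2) suffice), and then concentrates on $a_T=e^{-\theta T}\int_0^Te^{\theta s}dX_s\big/\eta_\infty$; you instead re-derive an equivalent reduction pathwise, via $Z_t=X_t+\theta e^{\theta t}\eta_t$, the elementary identity for $Z_T^2-2\theta\int_0^TZ_s^2\,ds$, and the a.s.\ growth bound of Theorem~\ref{thm:2.3}, which is self-contained and reproduces exactly what $b_T\to2$, $c_T\to0$ encode. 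For the law of the leading term, the paper proves Lemma~\ref{l:H3-l4} (variance asymptotics of $e^{-\theta T}\int_0^Te^{\theta s}dX_s$, via l'H\^opital's rule plus the hypergeometric asymptotics of Lemma~\ref{l:H3-l3}) and Lemma~\ref{l:H4} ($\cov(X_s,\cdot)\sim C\,T^{\beta+\gamma}$ for each fixed $s$), and then follows the finite-dimensional scheme of \cite[Lemma~2.4]{emeso2016} to obtain asymptotic independence from an arbitrary $\sigma\{X\}$-measurable $F$; you compute $\var V_T$ directly by a Laplace-type substitution $s=T-v$ (your constants $\int_0^\infty\bigl(e^{-w}\int_0^we^qq^\gamma dq\bigr)^2dw=\sigma(\gamma)^2$ and $\mathrm{B}(2\alpha+1,2\gamma+1)=\varsigma^2$ are correct and agree with Lemma~\ref{l:H3-l4}), and you prove asymptotic independence only from $\eta_\infty$, by writing $\eta_\infty=\int_0^\infty g\,dW$ and showing $\corr(\eta_\infty,V_T)\to0$ via truncation at a level $\Lambda$; since the pair is jointly Gaussian this suffices for the theorem and is arguably more direct, at the price of justifying yourself the variance case analysis (uniformity of the $1+o(1)$, the splitting at $v\asymp\varepsilon T$) and the stochastic-Fubini representation of $\eta_\infty$ --- details you defer, much as the paper defers its computations to Appendix~\ref{s:B} (the paper itself derives the kernel of $\eta_\infty$ in its final remark). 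Two small points: in your display for $h_T(T-v)$ the prefactor should be $T^{\alpha+\beta}$ rather than $T^\beta$ (the factor $s^\alpha=(T-v)^\alpha$ is missing), though your stated asymptotics for $\var V_T$ already carry the correct $T^{2\alpha+2\beta}$; and your bound $\bigl|\int_0^\Lambda g\,h_T\bigr|\lesssim T^{\beta+\gamma}=o\bigl(\sqrt{\var V_T}\bigr)$ rests on exactly the same exponent comparisons ($\gamma<\alpha$, respectively $\alpha>-\frac12$) that the paper encodes through Lemma~\ref{l:H4}.
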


\begin{remark}
The asymptotic distribution of $\hat\theta_T$ when $X$ is a fractional Brownian motion was first obtained in \cite{belfadli2011}.
In this case $\alpha = \frac12 - H$, $\beta = H - \frac12$,  $\gamma = H - \frac32$ for $H\in(\frac12,1)$, so it corresponds to the first statement of Theorem~\ref{th:cauchy}.
\end{remark}

The proof of Theorem~\ref{th:cauchy} follows the scheme from \cite{emeso2016}. We split it into several lemmata. First, let us note that similarly to the proof of \cite[Thm.~2.2]{emeso2016}, we have the following representation for $e^{\theta T} \left(\hat\theta_T-\theta\right )$.

\begin{lemma}[\cite{emeso2016}]
For any $T>0$,
\begin{equation} \label{eq:err-repr}
e^{\theta T} \left(\hat\theta_T-\theta\right )
= a_T b_T + c_T,
\end{equation}
where
\[
a_T = \frac{e^{-\theta T} \int_0^T e^{\theta s}\,dX_s}{\eta_\infty},
\qquad
b_T = \frac{\theta \eta_T \eta_\infty}{e^{-2\theta T} \int_0^T Z_s^2\,ds},
\qquad
c_T = \frac{e^{-\theta T} R_T}{e^{-2\theta T} \int_0^T Z_s^2\,ds},
\]
$R_T = \frac12 X_T^2 - \theta \int_0^T X_s^2\,ds
+ \theta^2 \int_0^T\!\!\int_0^s X_s X_r e^{-\theta(s-r)}\,dr\,ds$.
Moreover, the following convergences hold as $T\to \infty$
\begin{equation}\label{eq:b-c-conv}
b_T \to 2\;\text{ a.s.},
\qquad
c_T \to 0 \;\text{ in probability}.
\end{equation}
\end{lemma}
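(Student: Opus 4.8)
The identity \eqref{eq:err-repr} is purely algebraic --- it is the one-dimensional counterpart of the decomposition in the proof of \cite[Thm.~2.2]{emeso2016} --- so the plan is to derive it pathwise from the Riemann--Stieltjes representation \eqref{eq:solution} and then read off the limits in \eqref{eq:b-c-conv} from the growth bound of Theorem~\ref{thm:2.3} together with an elementary Cesàro/L'Hôpital argument. First I would rewrite the noise contribution: integration by parts in \eqref{eq:solution} (legitimate since $r\mapsto e^{-\theta r}$ is $C^1$ and $X$ is continuous with $X_0=0$) gives $\int_0^s e^{-\theta r}\,dX_r=e^{-\theta s}X_s+\theta\eta_s$, hence
\[
Z_s=X_s+\theta e^{\theta s}\eta_s,\qquad s\ge 0 .
\]
Because $\dot\eta_s=e^{-\theta s}X_s$, the map $s\mapsto\eta_s$ is $C^1$, so from here on every integral is an ordinary Lebesgue integral and no Young integral is needed --- which is exactly why the form \eqref{eq:estimator1} was adopted.

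Starting from $\hat\theta_T-\theta=\bigl(Z_T^2-2\theta\int_0^T Z_s^2\,ds\bigr)\big/\bigl(2\int_0^T Z_s^2\,ds\bigr)$, I would substitute the displayed formula for $Z$, expand the squares, and cancel the $\eta_s^2$-terms using
\[
\theta^2 e^{2\theta T}\eta_T^2-2\theta^3\!\int_0^T e^{2\theta s}\eta_s^2\,ds=2\theta^2\!\int_0^T e^{\theta s}\eta_s X_s\,ds ,
\]
which follows by integrating $\frac{d}{ds}\bigl(e^{2\theta s}\eta_s^2\bigr)=2\theta e^{2\theta s}\eta_s^2+2e^{\theta s}\eta_s X_s$. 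This collapses the numerator to $X_T^2+2\theta e^{\theta T}X_T\eta_T-2\theta\int_0^T X_s^2\,ds-2\theta^2\int_0^T e^{\theta s}X_s\eta_s\,ds$. Two more integrations by parts,
\begin{gather*}
\int_0^T e^{\theta s}X_s\eta_s\,ds=\eta_T\!\int_0^T e^{\theta s}X_s\,ds-\int_0^T\!\!\int_0^s e^{-\theta(s-r)}X_sX_r\,dr\,ds,\\
\int_0^T e^{\theta s}\,dX_s=e^{\theta T}X_T-\theta\!\int_0^T e^{\theta s}X_s\,ds,
\end{gather*}
then let me, after dividing numerator and denominator by $e^{2\theta T}$, regroup the four terms as $2\theta\eta_T\,e^{-\theta T}\!\int_0^T e^{\theta s}\,dX_s+2e^{-\theta T}R_T$ over $2e^{-2\theta T}\!\int_0^T Z_s^2\,ds$, where the leftover $\frac12X_T^2-\theta\int_0^T X_s^2\,ds+\theta^2\int_0^T\!\int_0^s X_sX_r e^{-\theta(s-r)}\,dr\,ds$ is precisely $R_T$; the factor $\eta_\infty$ cancels between the denominator of $a_T$ and the numerator of $b_T$, which yields \eqref{eq:err-repr}.

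For \eqref{eq:b-c-conv} the decisive input is Theorem~\ref{thm:2.3}: almost surely $\sup_{t\ge 3}|X_t|\big/\bigl(t^{H}\sqrt{\log\log t}\bigr)<\infty$ with $H=\alpha+\beta+\gamma+\frac32>0$, so for any small $\varepsilon>0$ there is a random constant with $|X_t|\le C(\omega)(1+t)^{H+\varepsilon}$ for all $t\ge 0$; in particular $e^{-\theta t}X_t\to0$ a.s. Hence $e^{-\theta s}Z_s=e^{-\theta s}X_s+\theta\eta_s\to\theta\eta_\infty$ a.s., and applying pathwise the Cesàro-type lemma ``$f$ locally bounded and $f(s)\to\ell$ imply $e^{-2\theta T}\int_0^T e^{2\theta s}f(s)\,ds\to\ell/(2\theta)$'' with $f(s)=(e^{-\theta s}Z_s)^2$ gives $e^{-2\theta T}\int_0^T Z_s^2\,ds\to\theta\eta_\infty^2/2$ a.s., a limit that is a.s.\ strictly positive since $\eta_\infty$ is a non-degenerate Gaussian variable (recall $\ME[X_sX_u]>0$). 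Together with $\eta_T\to\eta_\infty$ this gives $b_T\to 2$ a.s. For $c_T$ it is enough that $e^{-\theta T}R_T\to0$: using $|X_t|\le C(1+t)^{H+\varepsilon}$ and $\int_0^s(1+r)^{H+\varepsilon}e^{-\theta(s-r)}\,dr\le\theta^{-1}(1+s)^{H+\varepsilon}$, each of the three terms of $e^{-\theta T}R_T$ is $O\bigl(e^{-\theta T}(1+T)^{2H+2\varepsilon+1}\bigr)\to0$, so in fact $c_T\to0$ a.s., which is stronger than the asserted convergence in probability.

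The fiddly part is the bookkeeping in the regrouping step --- one has to track several integrations by parts and make sure the boundary terms and the $e^{\pm\theta s}$ weights land in $a_T$, $b_T$ and $R_T$ in exactly the right slots --- but this is entirely deterministic. The only genuinely probabilistic ingredient is the growth estimate of Theorem~\ref{thm:2.3}: it both forces $e^{-\theta T}X_T\to0$ (which pins down the limits of the denominator and of $b_T$) and controls $R_T$; everything else reduces to ordinary calculus plus the elementary Cesàro lemma.
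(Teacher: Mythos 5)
Your proposal is correct, and I verified the algebra: with $Z_s=X_s+\theta e^{\theta s}\eta_s$, the identity $\theta^2e^{2\theta T}\eta_T^2-2\theta^3\int_0^Te^{2\theta s}\eta_s^2\,ds=2\theta^2\int_0^Te^{\theta s}\eta_sX_s\,ds$ and the two further integrations by parts do collapse the numerator of $\hat\theta_T-\theta$ to $2\theta\eta_T\int_0^Te^{\theta s}\,dX_s+2R_T$, which gives \eqref{eq:err-repr} exactly, and your limit arguments (the growth bound of Theorem~\ref{thm:2.3} giving $e^{-\theta s}X_s\to0$ a.s., the elementary C\`esaro/L'H\^opital lemma giving $e^{-2\theta T}\int_0^TZ_s^2\,ds\to\theta\eta_\infty^2/2>0$ a.s., and the crude bound $|X_t|\le C(\omega)(1+t)^{H+\varepsilon}$ controlling $R_T$) correctly yield $b_T\to2$ a.s.\ and even $c_T\to0$ a.s., which is stronger than the stated convergence in probability. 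Your route differs from the paper's in that the paper does not reprove the lemma at all: it imports the decomposition from the proof of \cite[Thm.~2.2]{emeso2016} and merely observes that the hypotheses used there reduce to the properties (H1) (H\"older continuity) and (H2) (the self-similar variance bound), which hold for the Volterra process at hand. What the citation buys is brevity and a clean separation of what is model-specific; what your self-contained derivation buys is transparency about exactly which ingredients are needed --- continuity, $X_0=0$, existence of the pathwise Young/Riemann--Stieltjes integrals, the growth estimate of Theorem~\ref{thm:2.3}, and the non-degeneracy of $\eta_\infty$ (which, as you note, follows from $\ME[X_sX_u]>0$, cf.\ \eqref{eq:invrcov}, so that $\var\eta_\infty>0$ and the Gaussian limit $\eta_\infty$ is a.s.\ nonzero) --- and it makes explicit the a.s.\ strengthening of the convergence of $c_T$; essentially you have reconstructed the argument of the cited paper in the present setting.
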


Indeed, the analysis of the proof of \cite[Thm.~2.2]{emeso2016} shows that the conditions (H1) and (H2) are sufficient for the representation \eqref{eq:err-repr}, as well as for convergences \eqref{eq:b-c-conv}.
Hence, it remains to investigate the asymptotic behavior of $a_T$. And the remaining part of this section is devoted to this issue.
Or first goal is to study the asymptotic behavior of
the expectations
\begin{equation}\label{eq:expect}
\ME\left(e^{-\theta T} \int_0^T e^{\theta s}dX_s\right )^2
\quad\text{and}\quad
\ME\left(X_s e^{-\theta T} \int_0^T e^{\theta r}dX_r\right ),
\; s \ge 0.
\end{equation}

\begin{remark}
Note that, generally speaking, the expectations \eqref{eq:expect} do not satisfy the conditions $(\mathcal{H}3)$ and $(\mathcal{H}4)$ of the paper \cite{emeso2016}. In particular, the first expectation does not converge to a positive constant (see Lemma~\ref{l:H3-l4} below). Therefore the results of \cite{emeso2016} (in particular, Lemma 2.4) cannot be applied to establish the asymptotic behavior of $a_T$ directly.
\end{remark}

The following lemma describes the asymptotic behavior of the first expectation in \eqref{eq:expect}.
This result can be viewed as an analog of the assumption $(\mathcal H3)$ from \cite{emeso2016}.
\begin{lemma}\label{l:H3-l4}
Let $\alpha$, $\beta$ and $\gamma$ satisfy the conditions \eqref{neq:procXconds}, and let $\theta>0$.

\begin{enumerate}
\item
If $\gamma\in(-1,-\frac12)$, then
\[
\ME\left(e^{-\theta T} \int_0^T e^{\theta s}dX_s\right )^2
\sim \frac{\sigma^2}{\theta^{2\gamma+3}}\, T^{2\alpha+2\beta},
\quad\text{as } T\to\infty.
\]
\item
If $\gamma = -\frac12$, then
\[
\ME\left(e^{-\theta T} \int_0^T e^{\theta s}dX_s\right )^2
\sim \frac{1}{\theta^2} T^{2\alpha+2\beta} \log T,
\quad\text{as } T\to\infty.
\]
\item
If $\gamma > -\frac12$, then
\[
\ME\left(e^{-\theta T} \int_0^T e^{\theta s}dX_s\right )^2
\sim \frac{\varsigma^2}{\theta^2} T^{2\alpha+2\beta+2\gamma+1},
\quad\text{as } T\to\infty.
\]
\end{enumerate}
\end{lemma}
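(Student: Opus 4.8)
\emph{Reduction to a deterministic integral.} The first step is to write the random variable under the expectation as a Wiener integral. Integrating by parts pathwise and using $X_0=0$ gives $\int_0^T e^{\theta s}\,dX_s=e^{\theta T}X_T-\theta\int_0^T e^{\theta s}X_s\,ds$; substituting the Volterra representation $X_s=\int_0^s K(s,r)\,dW_r$ with $K(s,r)=r^\alpha\int_r^s u^\beta(u-r)^\gamma\,du$, interchanging the order of integration in the drift term, and integrating by parts once more in $s$ (using $K(r,r)=0$ and $\partial_s K(s,r)=r^\alpha s^\beta(s-r)^\gamma$) yields
\[
e^{-\theta T}\int_0^T e^{\theta s}\,dX_s=\int_0^T r^\alpha J(T,r)\,dW_r,\qquad
J(T,r):=\int_r^T e^{-\theta(T-s)}s^\beta(s-r)^\gamma\,ds .
\]
By the Wiener isometry, $I(T):=\ME\bigl(e^{-\theta T}\int_0^T e^{\theta s}\,dX_s\bigr)^2=\int_0^T r^{2\alpha}J(T,r)^2\,dr$, so everything reduces to the asymptotics of a deterministic double integral; conditions \eqref{neq:procXconds} are precisely what make it finite (the behaviour at $r\to 0$ being governed by $\alpha>-\tfrac12$ when $\beta+\gamma\ge-1$ and by $\alpha+\beta+\gamma>-\tfrac32$ when $\beta+\gamma<-1$).

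\emph{Localisation.} The substitution $s=T-w$ turns $J(T,r)$ into $\int_0^{T-r}e^{-\theta w}(T-w)^\beta(T-r-w)^\gamma\,dw$, so the weight $e^{-\theta w}$ concentrates the mass of $J$ near $s=T$. Two regimes then compete in $\int_0^T r^{2\alpha}J(T,r)^2\,dr$: a \emph{bulk} regime $r\le T-A$, where $(T-w)^\beta\sim T^\beta$ and $(T-r-w)^\gamma\sim(T-r)^\gamma$ so $J(T,r)\sim\theta^{-1}T^\beta(T-r)^\gamma$, contributing $\sim\theta^{-2}T^{2\beta}\int_0^{T-A}r^{2\alpha}(T-r)^{2\gamma}\,dr$; and a \emph{right-edge} regime $r=T-v$, $v\le A$, where $J(T,T-v)\sim T^\beta\phi(v)$ with $\phi(v):=\int_0^v e^{-\theta(v-u)}u^\gamma\,du$, contributing $\sim T^{2\alpha+2\beta}\int_0^A\phi(v)^2\,dv$. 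Which regime wins is decided by the convergence at $r=T$ of $\int r^{2\alpha}(T-r)^{2\gamma}\,dr$, i.e.\ by the sign of $2\gamma+1$, and this produces the trichotomy.

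\emph{The three cases.} For $\gamma>-\tfrac12$ the bulk dominates: after rescaling $r=Tx$, $T^{-\beta-\gamma}J(T,Tx)\to\theta^{-1}(1-x)^\gamma$ pointwise on $(0,1)$, and dominated convergence gives $I(T)\sim\theta^{-2}\mathrm{B}(2\alpha+1,2\gamma+1)\,T^{2\alpha+2\beta+2\gamma+1}$, which is the claim since $\varsigma^2=\mathrm{B}(2\alpha+1,2\gamma+1)$. For $\gamma=-\tfrac12$ the bulk still dominates, but now $\int_0^{T-A}r^{2\alpha}(T-r)^{-1}\,dr\sim T^{2\alpha}\log T$ while the edge contributes only $O(T^{2\alpha+2\beta})$, so $I(T)\sim\theta^{-2}T^{2\alpha+2\beta}\log T$. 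For $\gamma\in(-1,-\tfrac12)$ the edge dominates: since $\phi(v)=O(v^{\gamma+1})$ as $v\to0$ and $\phi(v)\sim\theta^{-1}v^\gamma$ as $v\to\infty$, $\phi\in L^2(0,\infty)$ exactly in this range, and substituting $r=T-v$ gives $I(T)\sim T^{2\alpha+2\beta}\int_0^\infty\phi(v)^2\,dv$. To evaluate the constant, note that $\phi$ is the convolution of $g_1(v)=e^{-\theta v}\mathbf{1}_{\{v>0\}}$ and $g_2(v)=v^\gamma\mathbf{1}_{\{v>0\}}$, whose Fourier transforms satisfy $|\widehat{g_1}(\xi)|^2=(\theta^2+\xi^2)^{-1}$ and $|\widehat{g_2}(\xi)|^2=\Gamma(\gamma+1)^2|\xi|^{-2\gamma-2}$; Plancherel's theorem then gives
\[
\int_0^\infty\phi(v)^2\,dv=\frac{\Gamma(\gamma+1)^2}{2\pi}\int_{\mathbb{R}}\frac{|\xi|^{-2\gamma-2}}{\theta^2+\xi^2}\,d\xi=\frac{\Gamma(\gamma+1)^2}{\pi}\,\theta^{-2\gamma-3}\int_0^\infty\frac{t^{-2\gamma-2}}{1+t^2}\,dt ,
\]
and, using $\int_0^\infty t^{a-1}(1+t^2)^{-1}\,dt=\pi/(2\sin(\pi a/2))$ with $a=-2\gamma-1$ together with Euler's reflection formula, this simplifies to $\theta^{-2\gamma-3}\Gamma(\gamma+1)\Gamma(-2\gamma-1)\Gamma(2\gamma+2)/\Gamma(-\gamma)=\sigma^2\theta^{-2\gamma-3}$, as required.

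\emph{Main obstacle.} The delicate part is the integrable domination needed to justify the limit passages uniformly in large $T$, because $J(T,r)$ must be controlled simultaneously in the bulk, near $r\to T$ (where it is modelled by the profile $\phi$), and near $r\to0$ (where the integrand defining $J$ can blow up when $\beta$ is very negative). One resolves this by splitting $J=J_{\mathrm{near}}+J_{\mathrm{far}}$ at the midpoint $\tfrac{r+T}{2}$: the near part enjoys the clean uniform bound $J_{\mathrm{near}}(T,r)\le C\,T^\beta(T-r)^\gamma$ valid for all $r\in(0,T)$, which after rescaling produces the majorant $C\,x^{2\alpha}(1-x)^{2\gamma}\in L^1(0,1)$ (here $\alpha>-\tfrac12$, $\gamma>-\tfrac12$ are used in case~(3)), while the far part is bounded by $e^{-\theta(T-r)/2}$ times $O(1)$ powers of $T$ and of $r^{-1}$, so its contribution to $\int_0^T r^{2\alpha}J_{\mathrm{far}}^2\,dr$ is $O(e^{-cT})$ times a polynomial in $T$ on $r\le T/2$ and $O(T^{2\alpha+2\beta})$ on $r\ge T/2$; in case~(1) one instead keeps the whole $J$ near the edge and uses $\phi(v)\le C\min(v^{\gamma+1},v^\gamma)$ for the majorant. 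The inequality $2(\alpha+\beta+\gamma)+2>-1$ is exactly what keeps the relevant integrand integrable at $r=0$ throughout. Once the domination is in place, the remaining Beta- and Gamma-function identities are routine.
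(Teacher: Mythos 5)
Your proposal is correct in substance, but it takes a genuinely different route from the paper's. Both arguments start from the same Wiener-integral representation $e^{-\theta T}\int_0^T e^{\theta s}\,dX_s=\int_0^T r^\alpha\int_r^T e^{-\theta(T-s)}s^\beta(s-r)^\gamma\,ds\,dW_r$ (you derive it by pathwise integration by parts and stochastic Fubini; the paper simply cites formula (14) of \cite{Part2}), hence from $I(T)=\int_0^T r^{2\alpha}J(T,r)^2\,dr$. From there the paper removes the exponential by l'H\^opital's rule, reducing the problem to the single-derivative integral $h(T)$ of Lemma~\ref{l:H3-l3}, whose inner integral is recognized after substitution as a Gauss hypergeometric function; the trichotomy in $\gamma$ and the constants $\sigma^2$, $\varsigma^2$ then come directly from the standard behaviour of $\hyper(a,b;c;x)$ as $x\uparrow1$ (Proposition~\ref{prop:hyperlim1}), with the delicate edge analysis outsourced to that classical result. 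You instead run a Laplace-type localization on $I(T)$ itself, splitting bulk ($J(T,r)\approx\theta^{-1}T^\beta(T-r)^\gamma$) against the edge profile $\phi(v)=\int_0^v e^{-\theta(v-u)}u^\gamma\,du$, with the sign of $2\gamma+1$ deciding which regime wins; your constants agree with the paper's, since $\varsigma^2=\mathrm{B}(2\alpha+1,2\gamma+1)$ in case (3), and in case (1) the reflection-formula computation indeed gives $\Gamma(\gamma+1)^2/(-2\cos\pi\gamma)=\Gamma(\gamma+1)\Gamma(-2\gamma-1)\Gamma(2\gamma+2)/\Gamma(-\gamma)$, i.e.\ $\int_0^\infty\phi^2=\sigma^2\theta^{-2\gamma-3}$. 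Your route is more self-contained and makes the probabilistic mechanism (where the variance mass concentrates as $T\to\infty$) transparent, at the price of building the uniform dominations by hand in all three regimes --- which you identify correctly, including the role of $\alpha+\beta+\gamma>-\tfrac32$ near $r=0$, though a full write-up would have to carry these out in detail, much as the paper does in the proof of Lemma~\ref{l:H3-l3}. One technical point to repair if you expand this: for $\gamma\in(-1,-\tfrac12)$ the function $v^\gamma\indicatorfun_{\{v>0\}}$ lies in neither $L^1$ nor $L^2$, so the convolution-theorem/Plancherel step needs a regularization (or truncation) argument; alternatively, $\int_0^\infty\phi(v)^2\,dv$ can be computed directly as an iterated integral, which again reduces to Beta--Gamma identities and yields the same $\sigma^2\theta^{-2\gamma-3}$.
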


\begin{proof}
Using formula (14) from \cite{Part2} for the integral w.r.t.\ $X$ we may write
\begin{align*}
\MoveEqLeft
\ME\left(e^{-\theta T} \int_0^T e^{\theta s}dX_s\right )^2
=  e^{-2\theta T} \ME\left( \int_0^T s^\alpha \int_s^T e^{\theta t}t^{\beta} (t-s)^\gamma\,dt\,dW_s\right )^2
\\
&= e^{-2\theta T} \int_0^T s^{2\alpha}\left(\int_s^T e^{\theta t}t^{\beta} (t-s)^\gamma\,dt\right )^2 ds.
\end{align*}

1. Let $\gamma\in(-1,-\frac12)$.
Then using l'H\^opital's rule  we obtain
\begin{align*}
\MoveEqLeft
\lim_{T\to \infty} T^{-2\alpha-2\beta}\ME\left(e^{-\theta T} \int_0^T e^{\theta s}dX_s\right )^2
=\lim_{T\to \infty}
\frac{\int_0^T s^{2\alpha}\left(\int_s^T e^{\theta t}t^{\beta} (t-s)^\gamma\,dt\right )^2 ds}{T^{2\alpha+2\beta}e^{2\theta T}}
\\
&=\lim_{T\to \infty}
\frac{\int_0^T s^{2\alpha}\cdot 2\left(\int_s^T e^{\theta t}t^{\beta} (t-s)^\gamma\,dt\right ) e^{\theta T} T^{\beta} (T-s)^\gamma ds}{2\theta T^{2\alpha+2\beta}e^{2\theta T} + (2\alpha+2\beta) T^{2\alpha+2\beta-1}e^{2\theta T}}
\\
&=\lim_{T\to \infty}
\frac{\int_0^T s^{2\alpha}\left(\int_s^T e^{\theta t}t^{\beta} (t-s)^\gamma\,dt\right ) (T-s)^\gamma ds}{\theta T^{2\alpha+\beta}e^{\theta T} + (\alpha+\beta) T^{2\alpha+\beta-1}e^{\theta T}}
=\frac{\sigma^2}{\theta^{2\gamma+3}},
\end{align*}
where the last equality follows from Lemma \ref{l:H3-l3} (Appendix~\ref{s:B}).

2. Let $\gamma=-\frac12$.
Then by l'H\^opital's rule and Lemma \ref{l:H3-l3} (Appendix~\ref{s:B}), we get
\begin{align*}
\MoveEqLeft
\lim_{T\to \infty} \frac{1}{T^{2\alpha+2\beta}\log T}\ME\left(e^{-\theta T} \int_0^T e^{\theta s}dX_s\right )^2
\\
&=\lim_{T\to \infty}
\frac{\int_0^T s^{2\alpha}\left(\int_s^T e^{\theta t}t^{\beta} (t-s)^{-\frac12}\,dt\right )^2 ds}{T^{2\alpha+2\beta}(\log T)e^{2\theta T}}
\\
&=\lim_{T\to \infty}
\frac{\int_0^T s^{2\alpha}\cdot 2\left(\int_s^T e^{\theta t}t^{\beta} (t-s)^{-\frac12}\,dt\right ) e^{\theta T} T^{\beta} (T-s)^{-\frac12} ds}{2\theta T^{2\alpha+2\beta}e^{2\theta T}\log T + (2\alpha+2\beta) T^{2\alpha+2\beta-1}e^{2\theta T}\log T + T^{2\alpha+2\beta-1}e^{2\theta T}}\\
&=\frac{1}{\theta^2}.
\end{align*}

3. Let $\gamma>-\frac12$.
Then arguing as above we have
\begin{align*}
\MoveEqLeft
\lim_{T\to \infty} \frac{1}{T^{2\alpha+2\beta+2\gamma+1}}\ME\left(e^{-\theta T} \int_0^T e^{\theta s}dX_s\right )^2
\\
&=\lim_{T\to \infty}
\frac{\int_0^T s^{2\alpha}\left(\int_s^T e^{\theta t}t^{\beta} (t-s)^\gamma\,dt\right )^2 ds}{T^{2\alpha+2\beta+2\gamma+1} e^{2\theta T}}
\\
&=\lim_{T\to \infty}
\frac{\int_0^T s^{2\alpha}\cdot 2\left(\int_s^T e^{\theta t}t^{\beta} (t-s)^\gamma\,dt\right ) e^{\theta T} T^{\beta} (T-s)^\gamma ds}{2\theta T^{2\alpha+2\beta+2\gamma+1}e^{2\theta T} + (2\alpha+2\beta+2\gamma+1) T^{2\alpha+2\beta+2\gamma}e^{2\theta T}}
=\frac{\varsigma^2}{\theta^2}.
\qedhere
\end{align*}
\end{proof}

The asymptotic behavior of the second expectation in \eqref{eq:expect} is stated in the following result.
\begin{lemma}\label{l:H4}
Let $\alpha$, $\beta$ and $\gamma$ satisfy the conditions \eqref{neq:procXconds}, and let $\theta>0$.
Then for any $s\ge0$
\[
\ME\left(X_s e^{-\theta T} \int_0^T e^{\theta r}dX_r\right )
\sim \frac{B(2\alpha+1,\gamma+1) s^{2\alpha+\beta+\gamma+2}}{\theta (2\alpha+\beta+\gamma+2)} \, T^{\beta+\gamma},
\quad\text{as } T\to\infty.
\]
\end{lemma}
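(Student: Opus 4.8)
The plan is to reduce the claim to a deterministic asymptotic computation via the Wiener-integral representation of $X$. By the definition of $X$ in \eqref{mainint} and formula (14) from \cite{Part2} applied with the integrand $r\mapsto e^{\theta r}$, one has
\[
X_s = \int_0^s z^\alpha\Bigl(\int_z^s u^\beta (u-z)^\gamma\,du\Bigr)\,dW_z,
\qquad
\int_0^T e^{\theta r}\,dX_r = \int_0^T z^\alpha\Bigl(\int_z^T e^{\theta v} v^\beta (v-z)^\gamma\,dv\Bigr)\,dW_z .
\]
Applying the isometry of the Wiener integral, and noting that for $T\ge s$ only the $z$-range $[0,s]$ contributes, I would write
\[
\ME\Bigl(X_s\, e^{-\theta T}\!\int_0^T e^{\theta r}\,dX_r\Bigr)
= \int_0^s z^{2\alpha}\Bigl(\int_z^s u^\beta (u-z)^\gamma\,du\Bigr)\, e^{-\theta T}\Bigl(\int_z^T e^{\theta v} v^\beta (v-z)^\gamma\,dv\Bigr)\,dz .
\]

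The next step is to analyze the exponentially weighted inner integral. For each fixed $z\in(0,s]$, integration by parts (equivalently, l'Hôpital's rule, exactly as in the proof of Lemma~\ref{l:H3-l4}) gives
\[
e^{-\theta T}\int_z^T e^{\theta v} v^\beta (v-z)^\gamma\,dv = \frac{T^\beta (T-z)^\gamma}{\theta} + o\bigl(T^{\beta+\gamma}\bigr) \sim \frac{T^{\beta+\gamma}}{\theta},
\qquad T\to\infty,
\]
since $(T-z)^\gamma\sim T^\gamma$. Thus, after dividing the double integral above by $T^{\beta+\gamma}$, the integrand converges pointwise in $z$ to $\theta^{-1} z^{2\alpha}\int_z^s u^\beta (u-z)^\gamma\,du$. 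Granting the interchange of the limit with the $z$-integration (discussed below), the conclusion then follows from a Fubini argument on $\{0<z<u<s\}$ together with the Beta integral $\int_0^1 t^{2\alpha}(1-t)^\gamma\,dt=\mathrm{B}(2\alpha+1,\gamma+1)$:
\[
\int_0^s z^{2\alpha}\!\int_z^s u^\beta (u-z)^\gamma\,du\,dz
= \mathrm{B}(2\alpha+1,\gamma+1)\int_0^s u^{2\alpha+\beta+\gamma+1}\,du
= \frac{\mathrm{B}(2\alpha+1,\gamma+1)\, s^{2\alpha+\beta+\gamma+2}}{2\alpha+\beta+\gamma+2},
\]
which, multiplied by $\theta^{-1}$, is precisely the constant multiplying $T^{\beta+\gamma}$ in the statement.

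The main obstacle is the justification of passing to the limit under the integral sign: I need a $T$-uniform majorant $g(z)$ with $z^{2\alpha}\bigl(\int_z^s u^\beta(u-z)^\gamma\,du\bigr)g(z)$ integrable on $[0,s]$. I would produce it by splitting $\int_z^T = \int_z^{T/2}+\int_{T/2}^T$. On $[T/2,T]$ one has $(v-z)^\gamma\le C\,v^\gamma$ uniformly in $z\in[0,s]$, so that part is at most $C\,e^{-\theta T}\int_0^T e^{\theta v}v^{\beta+\gamma}\,dv = O(T^{\beta+\gamma})$ uniformly in $z$; on $[z,T/2]$ the exponential is bounded by $e^{\theta T/2}$, so after multiplication by $e^{-\theta T}$ that part is $O\bigl(e^{-\theta T/2}\bigl(z^{\beta+\gamma+1}+T^{\beta+\gamma+1}\bigr)\bigr)$, which for $T$ large is dominated by $C z^{\beta+\gamma+1}$. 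It is exactly here that all three conditions in \eqref{neq:procXconds} are used: they yield $2\alpha>-1$, $\gamma>-1$ and $\alpha+\beta+\gamma>-\tfrac32$, which together make $z^{2\alpha}\bigl(\int_z^s u^\beta(u-z)^\gamma\,du\bigr)\,z^{\beta+\gamma+1}$ integrable near $z=0$ in both regimes $\beta+\gamma>-1$ and $\beta+\gamma\le-1$ (in the latter the weight $\int_z^s u^\beta(u-z)^\gamma\,du$ itself behaves like $z^{\beta+\gamma+1}$ near $0$). With the majorant in hand, dominated convergence completes the proof.
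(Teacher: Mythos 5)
Your argument is correct and in substance the same as the paper's: both start from the Wiener isometry representation
$\ME\bigl(X_s\, e^{-\theta T}\int_0^T e^{\theta r}dX_r\bigr)=e^{-\theta T}\int_0^s z^{2\alpha}\bigl(\int_z^s u^\beta(u-z)^\gamma du\bigr)\bigl(\int_z^T e^{\theta v}v^\beta(v-z)^\gamma dv\bigr)dz$ for $T>s$, both reduce the limit of $T^{-\beta-\gamma}$ times this quantity to $\theta^{-1}\int_0^s z^{2\alpha}\int_z^s u^\beta(u-z)^\gamma du\,dz$, and both finish with the identical Fubini/Beta computation. The only real difference is where l'H\^opital enters: the paper applies it to the whole $z$-integral divided by $e^{\theta T}T^{\beta+\gamma}$, after which the remaining $T$-dependence in the integrand is just the factor $(1-z/T)^\gamma$, bounded by $\max(1,2^{-\gamma})$ for $T\ge 2s$, so dominated convergence is immediate; you instead establish the pointwise asymptotics $e^{-\theta T}\int_z^T e^{\theta v}v^\beta(v-z)^\gamma dv\sim T^{\beta+\gamma}/\theta$ for fixed $z$ and then build a $T$-uniform majorant, which you do essentially correctly via the split at $T/2$, using exactly the conditions \eqref{neq:procXconds} to get integrability near $z=0$ in both regimes of $\beta+\gamma$. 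Your route costs more work on the domination but is self-contained and avoids the (implicit in the paper) justification of differentiating the double integral under the integral sign. One small repair: on $[T/2,T]$ do not enlarge the bound to $C e^{-\theta T}\int_0^T e^{\theta v}v^{\beta+\gamma}dv$, since that integral diverges at $0$ when $\beta+\gamma\le -1$, a case the lemma covers; instead bound $v^{\beta+\gamma}\le C\,T^{\beta+\gamma}$ on $[T/2,T]$ and use $e^{-\theta T}\int_{T/2}^T e^{\theta v}dv\le \theta^{-1}$, which yields the same $O(T^{\beta+\gamma})$ uniformly in $z\in[0,s]$.
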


\begin{proof}
By formula (14) from \cite{Part2} for the integral w.r.t.\ $X$, we have for $0<s< T$
\begin{align*}
\MoveEqLeft
\ME\left(X_s e^{-\theta T} \int_0^T e^{\theta r}dX_r\right )
\\
&= e^{-\theta T} \ME\left(\int_0^s r^{\alpha} \int_r^s u^{\beta} (u-r)^{\gamma}\,du\,dW_r \cdot  \int_0^T r^\alpha \int_r^T e^{\theta t}t^{\beta} (t-r)^\gamma\,dt\,dW_r\right )
\\
&= e^{-\theta T} \int_0^s r^{2\alpha} \int_r^s u^{\beta} (u-r)^{\gamma}\,du \int_r^T e^{\theta t}t^{\beta} (t-r)^\gamma\,dt\,dr
\end{align*}
Then using this representation and l'H\^opital's rule, we get
\begin{align*}
\MoveEqLeft
\lim_{T\to\infty} T^{-\beta-\gamma}\ME\left(X_s e^{-\theta T} \int_0^T e^{\theta r}dX_r\right )
\\
&= \lim_{T\to\infty}  \frac{\int_0^s r^{2\alpha} \int_r^s u^{\beta} (u-r)^{\gamma}\,du \int_r^T e^{\theta t}t^{\beta} (t-r)^\gamma\,dt\,dr}{e^{\theta T} T^{\beta+\gamma}}
\\
&=\lim_{T\to\infty} \frac{\int_0^s r^{2\alpha} \int_r^s u^{\beta} (u-r)^{\gamma}\,du \, e^{\theta T}T^{\beta} (T-r)^\gamma\,dr}{\theta e^{\theta T}T^{\beta+\gamma} + e^{\theta T}(\beta+\gamma)T^{\beta+\gamma-1}}
\\
&=\lim_{T\to\infty} \frac{\int_0^s r^{2\alpha} (1-\frac{r}{T})^\gamma\int_r^s u^{\beta} (u-r)^{\gamma}\,du \,dr}{\theta + (\beta+\gamma)T^{-1}}.
\end{align*}
Finally, applying the Lebesgue dominated convergence theorem in the numerator, we obtain
\begin{align*}
\MoveEqLeft
\lim_{T\to\infty} T^{-\beta-\gamma}\ME\left(X_s e^{-\theta T} \int_0^T e^{\theta r}dX_r\right )
= \frac{1}{\theta} \int_0^s r^{2\alpha}\int_r^s u^{\beta} (u-r)^{\gamma}\,du\,dr
\\
&= \frac{1}{\theta} \int_0^s u^{\beta} \int_0^u r^{2\alpha}  (u-r)^{\gamma}\,dr\,du
= \frac{1}{\theta} B(2\alpha+1,\gamma+1)\int_0^s u^{2\alpha+\beta+\gamma+1} \,du
\\
&= \frac{B(2\alpha+1,\gamma+1) s^{2\alpha+\beta+\gamma+2} }{\theta (2\alpha+\beta+\gamma+2)}.
\qedhere
\end{align*}
\end{proof}

%
%

\begin{lemma}\label{l:a-conv}
Assume that the conditions \eqref{neq:procXconds} hold.
Let $F$ be any $\sigma\{X\}$-measurable random variable such that
$\prob(F < \infty) = 1$.
Denote by $N$ the standard normal random variable independent of $X$.
\begin{enumerate}
\item
If $\gamma\in(-1,-\frac12)$, then
\[
\left(F, \frac{1}{T^{\alpha+\beta}}\,e^{-\theta T} \int_0^T e^{\theta s}dX_s\right)
\dto \left(F, \frac{\sigma}{\theta^{\gamma+3/2}} N\right),
\quad\text{as } T\to\infty.
\]

\item
If $\gamma = -\frac12$, then
\[
\left(F, \frac{1}{T^{\alpha+\beta} \sqrt{\log T}}\,e^{-\theta T} \int_0^T e^{\theta s}dX_s\right)
\dto \left(F, \frac{1}{\theta} N\right),
\quad\text{as } T\to\infty.
\]

\item
If $\gamma > -\frac12$, then
\[
\left(F, \frac{1}{T^{\alpha+\beta+\gamma+1/2}}\,e^{-\theta T} \int_0^T e^{\theta s}dX_s\right)
\dto \left(F, \frac{\varsigma}{\theta} N\right),
\quad\text{as } T\to\infty.
\]
\end{enumerate}
\end{lemma}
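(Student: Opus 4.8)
The plan is to establish the joint convergence by combining a stable (mixed) central limit theorem for the Wiener-integral term with the $\sigma\{X\}$-measurability of $F$. Write $J_T \coloneqq e^{-\theta T}\int_0^T e^{\theta s}\,dX_s$; using formula~(14) from \cite{Part2}, as in the proof of Lemma~\ref{l:H3-l4}, we have
\[
J_T = \int_0^T \Phi_T(s)\,dW_s,
\qquad
\Phi_T(s) = e^{-\theta T} s^\alpha \int_s^T e^{\theta t} t^\beta (t-s)^\gamma\,dt,
\]
so that $J_T$ is a centered Gaussian random variable (for each $T$) with variance computed in Lemma~\ref{l:H3-l4}. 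Let $v_T$ denote the normalizing exponent of the relevant case, i.e.\ $v_T = T^{\alpha+\beta}$, $T^{\alpha+\beta}\sqrt{\log T}$, or $T^{\alpha+\beta+\gamma+1/2}$ according as $\gamma<-\tfrac12$, $\gamma=-\tfrac12$, or $\gamma>-\tfrac12$, and let $\kappa$ denote the corresponding limit constant ($\sigma\theta^{-\gamma-3/2}$, $\theta^{-1}$, or $\varsigma\theta^{-1}$), so that Lemma~\ref{l:H3-l4} says $\ME(J_T/v_T)^2\to\kappa^2$. Since $J_T/v_T$ is Gaussian with variance tending to $\kappa^2$, it converges in distribution to $\kappa N$.

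Next I would upgrade this marginal convergence to the asserted joint convergence with $F$. The key structural fact is that $W$ generates a strictly larger filtration than $X$ does, but the increments of $W$ on $[T,\infty)$ are \emph{not} what carries $J_T$; rather $J_T$ is built from $W$ on all of $[0,T]$, so it is \emph{not} asymptotically independent of $F$ in general. The correct route is therefore a \emph{stable convergence} argument: I claim that $J_T/v_T \to \kappa N$ \emph{$\sigma\{X\}$-stably}, where $N$ is independent of the whole process $X$. To see this, it suffices (by a standard criterion for stable convergence of Gaussian families, e.g.\ via characteristic functions conditioned on $\sigma\{X\}$) to show that for every bounded $\sigma\{X\}$-measurable $G$ and every $\lambda\in\mathbb R$,
\[
\ME\!\left[G\,\exp\!\left(i\lambda\, J_T/v_T\right)\right] \longrightarrow \ME[G]\,\exp\!\left(-\tfrac12 \lambda^2 \kappa^2\right).
\]
Equivalently, one shows $\cov(X_s,\, J_T/v_T)\to 0$ for each fixed $s\ge 0$ (so that $J_T/v_T$ decorrelates from, hence by jointly-Gaussian reasoning becomes independent of, any finite collection $X_{s_1},\dots,X_{s_k}$ in the limit), and then passes from finite-dimensional functionals to all $\sigma\{X\}$-measurable $G$ by a monotone class / density argument. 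But $\cov(X_s, J_T) = \ME\bigl(X_s e^{-\theta T}\int_0^T e^{\theta r}dX_r\bigr) \sim \mathrm{const}\cdot s^{2\alpha+\beta+\gamma+2}\,T^{\beta+\gamma}$ by Lemma~\ref{l:H4}, and in every case one checks $T^{\beta+\gamma}/v_T \to 0$: indeed $v_T \ge T^{\alpha+\beta}$ and $\beta+\gamma < \alpha+\beta$ is equivalent to $\gamma<\alpha$, which holds since $\gamma > -1$ is incompatible with... — more carefully, one verifies case by case that $T^{\beta+\gamma} = o(v_T)$ using $\gamma > -1$ together with the three normalizations (for $\gamma<-\tfrac12$ one needs $\beta+\gamma<\alpha+\beta$, i.e.\ $\gamma<\alpha$; this can fail, so the honest statement is that $\cov(X_s,J_T/v_T)$ stays bounded and, after dividing, one uses that the \emph{limit law of $J_T/v_T$ is non-degenerate Gaussian} so the limiting correlation with $X_s$ is $0$ whenever $\var(X_s)^{1/2}\cdot\kappa^{-1}\cdot\lim T^{\beta+\gamma}/v_T = 0$). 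I would therefore isolate the clean sufficient condition $\lim_{T\to\infty} T^{\beta+\gamma}/v_T = 0$ and verify it; this is the computational crux and should be presented as a short lemma-internal check.

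Granting $\sigma\{X\}$-stable convergence $J_T/v_T \to \kappa N$ with $N$ independent of $X$, the joint statement follows immediately: since $F$ is $\sigma\{X\}$-measurable, $(F, J_T/v_T) \dto (F, \kappa N)$ with $N\perp X$ is precisely the definition of stable convergence tested against the (possibly unbounded, but a.s.\ finite) functional $F$ — one truncates $F$ at level $M$, applies the bounded case, and lets $M\to\infty$ using $\prob(F<\infty)=1$ and the continuous mapping / Slutsky-type argument. Substituting the three values of $(v_T,\kappa)$ gives the three displayed conclusions, with $\kappa N = \frac{\sigma}{\theta^{\gamma+3/2}}N$, $\frac1\theta N$, and $\frac{\varsigma}{\theta}N$ respectively.

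The main obstacle is the stable-convergence step: one must rule out that $J_T/v_T$ retains asymptotic dependence on the past of $X$. Because $J_T$ is a Wiener integral over $[0,T]$ against a kernel $\Phi_T$ whose mass concentrates near $t=T$ (the factor $e^{\theta t}$ pushes weight to the right endpoint), the covariance with any fixed $X_s$ is negligible compared with $\sqrt{\var J_T}$; Lemma~\ref{l:H4} together with Lemma~\ref{l:H3-l4} quantifies exactly this, and the real work is the bookkeeping that $T^{\beta+\gamma}/v_T\to 0$ in all three regimes. Once that asymptotic negligibility of the correlations is in hand, jointly-Gaussian structure makes the independence in the limit automatic on finite-dimensional marginals, and a routine functional-class extension plus truncation of $F$ finishes the proof.
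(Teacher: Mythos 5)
Your route is essentially the paper's: reduce the joint statement to convergence of the Gaussian vectors $\bigl(X_{s_1},\dots,X_{s_d},\, J_T/v_T\bigr)$ (the passage from such finite-dimensional functionals of $X$ to an arbitrary a.s.-finite $\sigma\{X\}$-measurable $F$ is exactly the reduction in the proof of Lemma~2.4 of \cite{emeso2016}, which the paper cites rather than redoes), and then, by joint Gaussianity, verify convergence of the covariance matrices using Lemma~\ref{l:H3-l4} for the variance of $J_T/v_T$ and Lemma~\ref{l:H4} for the cross-covariances with the $X_{s_i}$. Your ``stable convergence'' phrasing is a repackaging of the same argument, and your limit constants agree with the statement.

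The genuine problem is at the step you yourself call the computational crux: you never actually verify that $T^{\beta+\gamma}/v_T\to 0$, and you assert, incorrectly, that in the case $\gamma\in(-1,-\tfrac12)$ the required inequality $\gamma<\alpha$ ``can fail''; the fallback you sketch is circular, since it merely restates the condition to be checked. Under \eqref{neq:procXconds} one has $\alpha>-\tfrac12$, so in case (1) $\gamma<-\tfrac12<\alpha$ and $T^{\beta+\gamma}/T^{\alpha+\beta}=T^{\gamma-\alpha}\to 0$; in case (2) the ratio is $T^{-\alpha-1/2}(\log T)^{-1/2}\to 0$; in case (3) it is $T^{-\alpha-1/2}\to 0$. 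Thus $\cov\bigl(X_{s_i},\,J_T/v_T\bigr)\to 0$ in all three regimes, the limiting Gaussian vector has $N$ independent of $(X_{s_1},\dots,X_{s_d})$, and your argument closes; with that check supplied the proof coincides with the paper's.
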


\begin{proof}
All three statements of the lemma are derived from Lemmata~\ref{l:H3-l4} and \ref{l:H4}. For example, let us consider the first statement.
As explained in the proof of \cite[Lemma~2.4]{emeso2016}, it suffices to prove that for any $d\ge1$ and any $s_1,\dots,s_d\in[0,\infty)$, as  $T\to\infty$,
\[
\left(X_{s_1},\dots, X_{s_d}, \frac{1}{T^{\alpha+\beta}}\,e^{-\theta T} \int_0^T e^{\theta s}dX_s\right)
\dto \left(X_{s_1},\dots, X_{s_d}, \frac{\sigma}{\theta^{\gamma+3/2}} N\right),
\]
and moreover, due to Gaussianity, it is sufficient to verify the convergence of the corresponding covariance matrices.
In turn, this convergence follows from Lemmata~\ref{l:H3-l4} and \ref{l:H4}. The second and the third statements of the lemma are proved by similar arguments.
\end{proof}

From Lemma~\ref{l:a-conv} one can derive the asymptotic behavior of the term $a_T$ defined in \eqref{eq:err-repr}.
\begin{corollary}\label{cor:a-conv}
Let the conditions of Theorem~\ref{th:cauchy} hold.
\begin{enumerate}
\item
If $\gamma\in(-1,-\frac12)$, then
\[
\frac{a_T}{T^{\alpha+\beta}}
\dto \frac{\sigma}{\theta^{\gamma+3/2}} \frac{N}{\eta_\infty}
\deq \frac{\sigma}{\theta^{\gamma+3/2}\sqrt{\ME\eta_\infty^2}} \, \cauchy,
\quad\text{as } T\to\infty.
\]

\item
If $\gamma = -\frac12$, then
\[
\frac{a_T}{T^{\alpha+\beta} \sqrt{\log T}}
\dto \frac{1}{\theta} \frac{N}{\eta_\infty}
\deq \frac{1}{\theta\sqrt{\ME\eta_\infty^2}} \, \cauchy,
\quad\text{as } T\to\infty.
\]

\item
If $\gamma > -\frac12$, then
\[
\frac{a_T}{T^{\alpha+\beta+\gamma+1/2}}
\dto \frac{\varsigma}{\theta^{\gamma+1/2}} \frac{N}{\eta_\infty}
\deq \frac{\varsigma}{\theta^{\gamma+1/2}\sqrt{\ME\eta_\infty^2}} \, \cauchy,
\quad\text{as } T\to\infty.
\]
\end{enumerate}

\end{corollary}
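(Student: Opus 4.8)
The plan is to deduce Corollary~\ref{cor:a-conv} directly from Lemma~\ref{l:a-conv}, applied with the $\sigma\{X\}$-measurable random variable $F$ chosen to be $F = 1/\eta_\infty$, followed by a continuous-mapping argument and a standard identification of the limiting law. The first step is therefore to verify that this choice of $F$ is admissible, i.e.\ that $\eta_\infty \ne 0$ a.s. Now $\eta_\infty = \int_0^\infty e^{-\theta s}X_s\,ds$ is an a.s.\ and $L_2$ limit of the Gaussian variables $\eta_t$, hence it is Gaussian, and it is centered since $\ME\eta_\infty = \int_0^\infty e^{-\theta s}\ME X_s\,ds = 0$. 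Furthermore $\ME\eta_\infty^2 > 0$: for instance $\ME[\eta_\infty X_1] = \int_0^\infty e^{-\theta s}\,\ME[X_1 X_s]\,ds > 0$ because the increments of $X$ are positively correlated by \eqref{eq:invrcov}, so $\eta_\infty$ cannot be a.s.\ zero. Hence $F = 1/\eta_\infty$ is well-defined, $\sigma\{X\}$-measurable and a.s.\ finite, as required by Lemma~\ref{l:a-conv}.

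The second step is the passage to the ratio. Applying Lemma~\ref{l:a-conv}(1) with this $F$ gives, in the case $\gamma\in(-1,-\tfrac12)$,
\[
\left(\frac{1}{\eta_\infty},\; \frac{1}{T^{\alpha+\beta}}\,e^{-\theta T}\int_0^T e^{\theta s}\,dX_s\right)
\dto \left(\frac{1}{\eta_\infty},\; \frac{\sigma}{\theta^{\gamma+3/2}}\,N\right),\qquad T\to\infty,
\]
where $N\sim\mathcal N(0,1)$ is independent of $X$; the cases $\gamma = -\tfrac12$ and $\gamma > -\tfrac12$ are identical, with the corresponding normalizations and constants from parts (2) and (3) of Lemma~\ref{l:a-conv}. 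Since the product map $(x,y)\mapsto xy$ is continuous on $\mathbb R^2$, the continuous mapping theorem gives
\[
\frac{a_T}{T^{\alpha+\beta}}
= \frac{1}{\eta_\infty}\cdot\frac{1}{T^{\alpha+\beta}}\,e^{-\theta T}\int_0^T e^{\theta s}\,dX_s
\dto \frac{\sigma}{\theta^{\gamma+3/2}}\cdot\frac{N}{\eta_\infty},
\]
and analogously in the remaining two cases. It is worth stressing that what makes this step work is the \emph{joint} form of the convergence in Lemma~\ref{l:a-conv}: for finite $T$ the normalized integral and $\eta_\infty$ are both functionals of the same trajectory of $X$ and far from independent, yet the limiting law exhibits the normalized integral as independent of $\sigma\{X\}$ (a stable-convergence effect), and only the joint statement lets us divide by $\eta_\infty$ in the limit.

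The third step identifies the limit distribution. Since $\eta_\infty$ is centered Gaussian, $\eta_\infty \deq \sqrt{\ME\eta_\infty^2}\,N'$ with $N'\sim\mathcal N(0,1)$; as $N$ is independent of $X$, it is independent of $\eta_\infty$, so $N$ and $N'$ may be taken independent, and the classical fact that the quotient of two independent standard normal variables is standard Cauchy yields
\[
\frac{N}{\eta_\infty}
\deq \frac{1}{\sqrt{\ME\eta_\infty^2}}\cdot\frac{N}{N'}
\deq \frac{1}{\sqrt{\ME\eta_\infty^2}}\,\cauchy .
\]
Together with the previous display (and its analogues) this gives all three asserted convergences. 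I do not expect a genuine obstacle here: the statement is a corollary in the literal sense. The only points needing a little care are the non-degeneracy of $\eta_\infty$ — which I handle through the positivity of $\ME[X_1 X_s]$ coming from \eqref{eq:invrcov} — and the observation that it is the joint, rather than the marginal, convergence in Lemma~\ref{l:a-conv} that legitimizes the continuous-mapping step.
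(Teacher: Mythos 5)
Your proposal is correct and is precisely the intended argument: the paper states Corollary~\ref{cor:a-conv} without a separate proof, as an immediate consequence of Lemma~\ref{l:a-conv} via the choice $F=1/\eta_\infty$ (non-degenerate since $\eta_\infty$ is centered Gaussian with positive variance), the continuous mapping theorem applied to the product, and the identification of $N/\eta_\infty$ as $\bigl(\ME\eta_\infty^2\bigr)^{-1/2}\,\cauchy$ using the independence of $N$ and $\sigma\{X\}$. Note only that in case (3) your derivation produces the constant $\varsigma/\theta$, in agreement with Lemma~\ref{l:a-conv} and Theorem~\ref{th:cauchy}, so the factor $\theta^{\gamma+1/2}$ in the printed statement of the corollary is evidently a misprint rather than a gap in your argument.
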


\begin{proof}[Proof of Theorem~\ref{th:cauchy}]
Now the proof of Theorem~\ref{th:cauchy} immediately follows from the representation \eqref{eq:err-repr}, the convergences \eqref{eq:b-c-conv} and Corollary~\ref{cor:a-conv}, by the Slutsky theorem.
\end{proof}

\begin{remark}
The limits in Theorem~\ref{th:cauchy} contain the second moment $\ME\eta_\infty^2$. It can be computed by the following formulae:
\begin{equation}\label{eq:sec-mom}
\begin{split}
\ME\eta_\infty^2
&=\frac{2\Gamma(2\alpha+2\beta+2\gamma+3)}{\theta^{2\alpha+2\beta+2\gamma+5}}\cdot\frac{\Gamma(2\alpha+1)\Gamma(\gamma+1)}{\Gamma(2\alpha+\gamma+2)}
\\*
&\quad\times\int_0^1 \frac{x^{2\alpha+\beta+\gamma+1}}{(1+x)^{2\alpha+2\beta+2\gamma+3}}
\,\hyper(-\gamma,2\alpha+1;2\alpha+\gamma+2;x)\,dx,
\end{split}
\end{equation}
where $\hyper$ denotes the Gauss hypergeometric function (see Appendix~\ref{s:B}).
Indeed, using integration by parts, we can represent $\eta_\infty$ as follows
\[
\eta_\infty = -\frac1\theta\int_0^\infty  X_s \,d e^{-\theta s}
 = -\frac1\theta X_s e^{-\theta s}\Big|_{s=0}^\infty+\frac1\theta\int_0^\infty e^{-\theta s} dX_s
 =\frac1\theta\int_0^\infty e^{-\theta s} dX_s.
\]
Here the a.s.\ convergence $X_s e^{-\theta s}\to0$ as $s\to\infty$ follows from the bound \eqref{neq:thm-ssGrow1-asj-a}.
By \cite[formula (14)]{Part2}, the second moment of the integral w.r.t.\ X can be written as
\begin{align*}
\ME\eta_\infty^2
&=\frac{1}{\theta^2}\int_0^\infty s^{2\alpha}\left(\int_s^\infty e^{-\theta t} t^\beta (t-s)^\gamma\,dt\right)^2 ds
\\
&=\frac{1}{\theta^2}\int_0^\infty\!\!\int_s^\infty\!\!\int_s^\infty s^{2\alpha} e^{-\theta t-\theta r} t^\beta (t-s)^\gamma r^\beta (r-s)^\gamma\,dr\,dt\,ds
\\
&=\frac{1}{\theta^2}\int_0^\infty\!\!\int_0^t\!\!\int_s^\infty s^{2\alpha} e^{-\theta t-\theta r} t^\beta  (t-s)^\gamma r^\beta (r-s)^\gamma\,dr\,ds\,dt
\\
&=\frac{1}{\theta^2}\int_0^\infty\!\!\int_0^t\!\!\int_s^t s^{2\alpha} e^{-\theta t-\theta r} t^\beta  (t-s)^\gamma r^\beta (r-s)^\gamma\,dr\,ds\,dt
\\
&\quad+\frac{1}{\theta^2}\int_0^\infty\!\!\int_0^t\!\!\int_t^\infty s^{2\alpha} e^{-\theta t-\theta r} t^\beta  (t-s)^\gamma r^\beta (r-s)^\gamma\,dr\,ds\,dt
\\
&=\frac{1}{\theta^2}\int_0^\infty\!\!\int_0^t\!\!\int_0^r s^{2\alpha} e^{-\theta t-\theta r} t^\beta  (t-s)^\gamma r^\beta (r-s)^\gamma\,ds\,dr\,dt
\\
&\quad+\frac{1}{\theta^2}\int_0^\infty\!\!\int_t^\infty\!\!\int_0^t s^{2\alpha} e^{-\theta t-\theta r} t^\beta  (t-s)^\gamma r^\beta (r-s)^\gamma\,ds\,dr\,dt
\\
&=\frac{2}{\theta^2}\int_0^\infty\!\!\int_0^t\!\!\int_0^r s^{2\alpha} e^{-\theta t-\theta r} t^\beta  (t-s)^\gamma r^\beta (r-s)^\gamma\,ds\,dr\,dt
\\
&=\frac{2}{\theta^2}\int_0^\infty\!\!\int_0^t\!\!\int_0^1 z^{2\alpha} e^{-\theta t-\theta r} t^\beta  (t-zr)^\gamma r^{2\alpha+\beta+\gamma+1} (1-z)^\gamma\,dz\,dr\,dt
\\
&=\frac{2}{\theta^2}\int_0^\infty\!\!\!\int_0^1\!\!\int_0^1 z^{2\alpha} e^{-\theta t(1+x)} t^{2\alpha+2\beta+2\gamma+1} (1-zx)^\gamma x^{2\alpha+\beta+\gamma+1} (1-z)^\gamma dz dx dt
\\
&=\frac{2\Gamma(2\alpha+2\beta+2\gamma+3)}{\theta^{2\alpha+2\beta+2\gamma+5}}\int_0^1\!\!\int_0^1 \frac{z^{2\alpha}(1-zx)^\gamma x^{2\alpha+\beta+\gamma+1} (1-z)^\gamma}{(1+x)^{2\alpha+2\beta+2\gamma+3}}\,dz\,dx,
\end{align*}
whence \eqref{eq:sec-mom} follows due to \eqref{eq:hypergeom-int}.
\end{remark}

\begin{appendix}
\section{Auxiliary results}\label{s:A}
\label{apx:apx}

In the following definition, $\mathrm{T}$ is an index set of a stochastic
process; either $\mathrm{T}=[0,+\infty)$ or $\mathrm{T}=(0,+\infty)$.

\begin{definition}
The stochastic process $\{X_t,\; t\in\mathrm{T}\}$
is called  self-similar  (as defined in \cite{Embrechts2002})
if for every $a>0$ there exists $b>0$ such that the processes
$\{X_{at},\;\allowbreak t \in\mathrm{T}\}$ and
$\{b X_t,\;\allowbreak t\in\mathrm{T}\}$
have the same distribution.

The stochastic process $\{X_t,\; t\in\mathrm{T}\}$
is called  self-similar with self-similarity exponent  $\rho$
if for every $a>0$ the processes
$\{X_{at},\;\allowbreak t \in\mathrm{T}\}$ and
$\{a^\rho X_t,\;\allowbreak t\in\mathrm{T}\}$
have the same distribution.
\end{definition}

\begin{prop}[Lamperti \cite{Lamperti1962}]
\label{prop:sstostat-part1}
Let $\{X_t, t>0\}$
be a self-similar zero-mean Gaussian process
with self-similarity exponent $\rho$.
Then
$\{e^{-\rho t} X_{e^t}, t\in\mathbb{R}\}$
is a stationary zero-mean Gaussian process.
\end{prop}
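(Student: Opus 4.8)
The plan is to use the standard fact that a zero-mean Gaussian process is strictly stationary precisely when its covariance function is invariant under simultaneous time shifts, and to obtain this invariance directly from the scaling property of the covariance of a self-similar process. There is no deep step here; the whole content is one short computation, and the only point deserving a word of care is that $X$ is indexed by $(0,+\infty)$ while the Lamperti transform $Y$ is indexed by all of $\mathbb{R}$.

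First I would record that $Y_t := e^{-\rho t}X_{e^t}$, which is well defined for every real $t$ since $t\mapsto e^t$ is a bijection of $\mathbb{R}$ onto $(0,+\infty)$, is again a zero-mean Gaussian process. Indeed, for any $t_1,\dots,t_n\in\mathbb{R}$ the vector $(Y_{t_1},\dots,Y_{t_n})$ arises from the Gaussian vector $(X_{e^{t_1}},\dots,X_{e^{t_n}})$ by rescaling each coordinate by the deterministic factor $e^{-\rho t_i}$, hence it is Gaussian, and $\ME Y_t = e^{-\rho t}\,\ME X_{e^t}=0$. In particular all second moments are finite, so the covariance function $R(u,v):=\cov(X_u,X_v)=\ME[X_uX_v]$, $u,v>0$, is well defined.

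Next I would extract the scaling identity for $R$. Self-similarity with exponent $\rho$ means that for each $a>0$ the processes $\{X_{au},\,u>0\}$ and $\{a^\rho X_u,\,u>0\}$ have the same law; comparing the joint law of a pair of coordinates and taking the expectation of the product gives $R(au,av)=a^{2\rho}R(u,v)$ for all $u,v>0$. Then for any $s,t\in\mathbb{R}$ and any shift $h\in\mathbb{R}$, applying this identity with $a=e^h$ (so that $R(e^{s+h},e^{t+h})=e^{2\rho h}R(e^s,e^t)$), one gets
\[
\cov(Y_{s+h},Y_{t+h}) = e^{-\rho(s+t+2h)}R(e^{s+h},e^{t+h}) = e^{-\rho(s+t)}R(e^s,e^t) = \cov(Y_s,Y_t).
\]
Thus $\cov(Y_s,Y_t)$ depends only on $s-t$. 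Since $Y$ is a Gaussian process with constant (zero) mean whose covariance is invariant under joint time shifts, all its finite-dimensional distributions are shift-invariant, i.e.\ $Y$ is a strictly stationary zero-mean Gaussian process, as claimed. As noted, no step is an obstacle; the slightly delicate bookkeeping is only in keeping the two index sets straight and in correctly passing from ``equality of laws of the shifted processes'' to the quadratic scaling of $R$.
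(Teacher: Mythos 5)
Your proof is correct: the scaling identity $R(au,av)=a^{2\rho}R(u,v)$ follows from self-similarity, the shift computation for $\cov(Y_{s+h},Y_{t+h})$ is right, and invoking the fact that a Gaussian process with constant mean and shift-invariant covariance is strictly stationary closes the argument. The paper itself gives no proof of this proposition (it is stated as Lamperti's theorem with a citation), and your argument is precisely the standard one behind that citation, so there is nothing further to reconcile.
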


\begin{thm}[Marcus {\cite[Theorem~1.4]{Marcus1972}}]\label{thm:Marcus-theorem}
Let $X$ be a continuous stationary Gaussian process with $\ME X_t^2=1$.
Then
\[
\limsup_{t\to +\infty} \frac{|X_t|}{(2 \log t)^{1/2}}
\le 1
\quad  \text{almost surely}.
\]
\end{thm}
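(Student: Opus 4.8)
The plan is to prove the bound by a Borel--Cantelli argument along a deterministic grid of times, reinforced by a uniform-in-$n$ control of the oscillation of $X$ across unit intervals. Since the hypotheses are invariant under $X\mapsto-X$, it is enough to show that for each fixed $\varepsilon>0$ one has $|X_t|\le(1+\varepsilon)\sqrt{2\log t}$ for all $t$ large enough, almost surely; letting $\varepsilon$ decrease to $0$ along a sequence then yields the theorem. One may also assume $X$ is centred, because a nonzero constant mean forces $\var X_t<1$ and so only strengthens the inequality (apply the centred case to $(X_t-\ME X_t)/\sqrt{\var X_t}$).

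First I would fix a mesh $\delta\in(0,1)$ and inspect the grid points $k\delta$, $k\in\mathbb N$. The elementary tail estimate $\prob(|\mathcal N(0,1)|>x)\le\sqrt{2/\pi}\,x^{-1}e^{-x^2/2}$ together with $\var X_{k\delta}=1$ shows that $\prob\bigl(|X_{k\delta}|>(1+\tfrac\varepsilon2)\sqrt{2\log(k\delta)}\bigr)$ is of order $(k\delta)^{-(1+\varepsilon/2)^2}$, which is summable in $k$ since $(1+\varepsilon/2)^2>1$. Hence, by Borel--Cantelli, almost surely $|X_{k\delta}|\le(1+\tfrac\varepsilon2)\sqrt{2\log(k\delta)}$ for all large $k$.

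The oscillation bound is the step I expect to be the main obstacle. Set $\omega_n(\delta)=\sup\{|X_s-X_t|:s,t\in[n-1,n+1],\ |s-t|\le\delta\}$. Stationarity gives $\omega_n(\delta)\deq\omega_0(\delta)$, and path-continuity gives $\omega_0(\delta)<\infty$ a.s.\ and $\omega_0(\delta)\to0$ a.s.\ as $\delta\to0$. To convert this qualitative smallness into a summable tail I would invoke the Borell--Tsirelson--Ibragimov--Sudakov concentration inequality: since $\omega_0(\delta)$ is the supremum of an a.s.-bounded centred Gaussian family, $\prob(\omega_0(\delta)>u)\le\exp\bigl(-(u-\ME\omega_0(\delta))^2/(2\sigma^2(\delta))\bigr)$ for $u>\ME\omega_0(\delta)$, where $\sigma^2(\delta)=\sup_{|s-t|\le\delta}\var(X_s-X_t)\to0$ by mean-square continuity. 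Choosing $\delta$ so small that $\sigma^2(\delta)<\varepsilon^2/8$ makes $\prob(\omega_n(\delta)>\tfrac\varepsilon2\sqrt{2\log n})=O(n^{-2})$, summable, and another application of Borel--Cantelli yields $\omega_n(\delta)\le\tfrac\varepsilon2\sqrt{2\log n}$ a.s.\ for all large $n$. The delicate point is that one cannot simply take $\delta=1$: the resulting tail exponent $\varepsilon^2/(4\sigma^2(1))$ need not exceed the threshold needed for summability, so it is crucial to shrink the mesh until the variance proxy $\sigma^2(\delta)$ is small.

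Finally I would combine the two parts: for large $t$, write $t\in[n,n+1]$, choose the grid point $k\delta$ with $|t-k\delta|\le\delta$ (so $k\delta\in[n-1,n+1]$), and bound $|X_t|\le|X_{k\delta}|+\omega_n(\delta)\le(1+\tfrac\varepsilon2)\sqrt{2\log(n+1)}+\tfrac\varepsilon2\sqrt{2\log n}$, which is at most $(1+\varepsilon)(1+o(1))\sqrt{2\log t}$. Therefore $\limsup_{t\to+\infty}|X_t|/\sqrt{2\log t}\le1+\varepsilon$ almost surely, and since $\varepsilon>0$ is arbitrary, the claimed inequality follows.
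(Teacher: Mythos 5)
Your proof is correct, but note that the paper does not prove this statement at all: it is imported verbatim as Theorem~1.4 of Marcus (1972) and used as a black box in the proof of Theorem~\ref{thm:ssGrow2}, so any comparison is with Marcus's argument rather than with the paper. Your route --- a Gaussian tail bound plus Borel--Cantelli along the grid $k\delta$, combined with an oscillation bound $\omega_n(\delta)$ controlled by the Borell--TIS inequality with small variance proxy $\sigma^2(\delta)$ --- is sound: the reduction to the centred case works, $\sigma^2(\delta)\to0$ does follow from stationarity together with the standard fact that a.s.\ path continuity of a Gaussian process implies $L^2$-continuity (worth stating explicitly, since the hypothesis is only sample-path continuity), and your tail for $\omega_n(\delta)$ is $n^{-2+o(1)}$ rather than exactly $O(n^{-2})$ because of the fixed mean $\ME\omega_0(\delta)$ in the exponent, which still gives summability. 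Two cosmetic points: if the index set is $[0,\infty)$ you should use $\omega_1(\delta)$ (or intersect $[n-1,n+1]$ with the index set) rather than $\omega_0(\delta)$, and you should take the grid point below $t$ so that $k\delta$ genuinely lies in $[n-1,n+1]$. The main remark is that the two-step decomposition is more elaborate than necessary: applying Borell--TIS (or the Fernique/Landau--Shepp bound, which is essentially Marcus's tool) directly to the unit-block suprema $M_n=\sup_{t\in[n,n+1]}|X_t|\deq M_0$, whose variance proxy is $\sup_t\var X_t=1$, already gives $\prob\bigl(M_n>(1+\varepsilon)\sqrt{2\log n}\bigr)\le\exp\bigl(-\tfrac12\bigl((1+\varepsilon)\sqrt{2\log n}-\ME M_0\bigr)^2\bigr)=n^{-(1+\varepsilon)^2(1+o(1))}$, which is summable, and Borel--Cantelli then yields the sharp constant $1$ in one stroke; what your finer grid-plus-oscillation splitting buys is only that the concentration inequality is needed with a small variance proxy, at the cost of an extra discretization layer.
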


The following result is Theorem~1.1.1 from \cite{Embrechts2002}.
\begin{thm}[Embrechts and Maejima \cite{Embrechts2002}]
\label{thm:Hnonneg}
Let the stochastic process $\{X_t,\allowbreak t \ge0\}$
be nontrivial,
self-similar (as defined in \cite{Embrechts2002}),
and stochastically continuous at point $t=0$
Then $\{X_t,\allowbreak t\ge0\}$
is  self-similar with a  unique  exponent  $\rho$, and
the self-similarity exponent $\rho$ is nonnegative.
\end{thm}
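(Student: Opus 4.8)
The plan is to reduce the statement to a regularity property of the scaling function and then to the rigidity of Cauchy's functional equation. By nontriviality there is a time at which the process is not a.s.\ $0$, and using self-similarity we may take it to be $t=1$: the variable $X_1$ is not a.s.\ equal to $0$. For each $a>0$ the self-similarity hypothesis supplies some $b(a)>0$ with $\{X_{at}\}_{t\ge0}\deq\{b(a)X_t\}_{t\ge0}$; comparing one-dimensional marginals at $t=1$ gives $X_a\deq b(a)X_1$.

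First I would record the elementary fact that if a real random variable $Y$ satisfies $\lambda Y\deq Y$ for some $\lambda>0$ with $\lambda\ne1$, then $Y=0$ a.s.: replacing $\lambda$ by $1/\lambda$ if necessary we may assume $\lambda<1$, and then $\lambda^nY\deq Y$ for every $n$ while $\lambda^nY\to0$ a.s., so $Y\deq0$. Applied to $Y=X_1$ this shows that $b(a)$ is \emph{uniquely} determined, so $b\colon(0,\infty)\to(0,\infty)$ is a genuine function; composing two dilations and invoking uniqueness yields $b(1)=1$ and the multiplicativity $b(a_1a_2)=b(a_1)b(a_2)$.

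Next I would exploit stochastic continuity at $0$. Since $X_0\deq b(a)X_0$ for all $a$, either $b\equiv1$, in which case $b(a)=a^0$ and we set $\rho=0$; or $b(a_0)\ne1$ for some $a_0$, and then the fact above forces $X_0=0$ a.s. In the latter case stochastic continuity gives $X_a\to0$ in probability, hence $b(a)X_1\to0$ in distribution as $a\to0^+$. Since $X_1$ is not a.s.\ $0$, tightness first forces $b(a)$ to remain bounded as $a\to0^+$, and then any subsequential limit $c$ of $b(a_n)$ along $a_n\to0$ must satisfy $cX_1\deq0$, so $c=0$; thus $b(a)\to0$ as $a\to0^+$. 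Combined with multiplicativity this gives monotonicity: if $b(a)>1$ for some $a\in(0,1)$, then $b(a^n)=b(a)^n\to\infty$ while $a^n\to0$, contradicting $b(a^n)\to0$; hence $b\le1$ on $(0,1]$, $b\ge1$ on $[1,\infty)$ via $b(a)b(1/a)=1$, and consequently $b(a_1)=b(a_2)\,b(a_1/a_2)\le b(a_2)$ whenever $0<a_1<a_2$.

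Finally, $g(u):=\log b(e^u)$ is a non-decreasing solution of $g(u+v)=g(u)+g(v)$ on $\mathbb{R}$, so by the classical characterization of monotone solutions of Cauchy's equation (squeeze $g(u)$ between $g(q_1)$ and $g(q_2)$ for rationals $q_1<u<q_2$) we get $g(u)=\rho u$ with $\rho=g(1)\ge0$, i.e.\ $b(a)=a^\rho$. Hence $\{X_t,\,t\ge0\}$ is self-similar with exponent $\rho\ge0$, and uniqueness of the exponent is immediate: if $a^{\rho_1}$ and $a^{\rho_2}$ both serve as $b(a)$ for all $a$, then $\rho_1=\rho_2$. I expect the genuine difficulty to lie in the third paragraph — squeezing honest regularity (here, monotonicity near the origin) out of the a priori completely arbitrary multiplicative map $b$ using only stochastic continuity at the single point $t=0$; the surrounding steps are routine algebra or the textbook rigidity of Cauchy's functional equation.
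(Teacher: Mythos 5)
Your argument is correct, and it is essentially the standard proof of this result; the paper itself gives no proof at all, since Theorem~\ref{thm:Hnonneg} is imported verbatim from Embrechts--Maejima (their Theorem~1.1.1), so there is nothing internal to compare against. Your route --- uniqueness of the scale factor $b(a)$ via the rigidity lemma ``$\lambda Y\deq Y$, $\lambda\neq 1$ $\Rightarrow$ $Y=0$ a.s.'', multiplicativity of $b$, extraction of regularity (here $b(a)\to 0$ as $a\to 0^+$, hence monotonicity) from stochastic continuity at the single point $t=0$, and then Cauchy's functional equation for monotone additive functions --- is exactly the Lamperti-type argument used in that reference, and each step checks out, including the tightness/convergence-of-types step showing $b(a)\to 0$ and the dichotomy $b\equiv 1$ versus $X_0=0$ a.s. The only point worth tightening is the opening reduction: with ``nontrivial'' meaning that some marginal $X_{t_0}$ is not a.s.\ constant, the passage to ``$X_1$ is not a.s.\ $0$'' is immediate via $X_{t_0}\deq b(t_0)X_1$ only when $t_0>0$; if the sole non-degenerate marginal were at $t_0=0$ you need the stochastic-continuity hypothesis as well (if $X_1=0$ a.s.\ then $X_t=0$ a.s.\ for all $t>0$, and continuity in probability at $0$ forces $X_0=0$ a.s., contradicting nontriviality). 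This is a one-line patch and does not affect the substance of your proof.
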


\begin{remark}
The continuity condition is essential in Theorem~\ref{thm:Hnonneg},
as it is shown in the following examples.

The process $\{\dot X_t, t\ge 0\}$ defined in \cite{Part2} by formula
\[
\dot X_t = \int_0^t s^\alpha t^\beta (t-s)^\gamma \, dW_s
\]
for $\alpha>-\frac12$, $\beta\in\mathbb{R}$ and $\gamma>-\frac12$,
is self-similar with exponent
$\alpha+\beta+\gamma+\frac12$.
The process $\dot X$ is continuous at 0 if $\alpha+\beta+\gamma+\frac12>0$,
and it is not continuous at 0 if $\alpha+\beta+\gamma+\frac12<0$.

A non-stochastic process $\{X_t,\; t>0\}$ with $X(t)=t^\rho$
is self-similar with exponent $\rho$.
If $\rho\ge 0$, it can be continuously extended to point $0$.
Otherwise, if $\rho < 0$, such extension is impossible.

Notice also that Embrechts and Maejima's \cite{Embrechts2002} definition of self-similarity is
too general for nowhere-continuous processes.
The process that satisfies \cite[Definition 1.1.1]{Embrechts2002}
does not need to have the self-similarity exponent.
\end{remark}

\section{Gauss hypergeometric function and related integrals}\label{s:B}
In this appendix we collect the definition of the Gauss hypergeometric function $\hyper$ and some its properties that are required for our proofs.
We refer to the book \cite{Andrews1999} for further information on this topic.

Although $\hyper(a,b;c;x)$ can be defined for complex $a$, $b$, $c$ and $x$, here
we restrict ourselves to the case of real arguments.
Moreover, we assume that $c>b>0$.
In this case, we may define $\hyper(a,b;c;x)$ for $x<1$ by
the following Euler's integral representation \cite[Thm.~2.2.1]{Andrews1999}:
\begin{equation}\label{eq:hypergeom-int}
\hyper(a,b;c;x) = \frac{\Gamma(c)}{\Gamma(b)\Gamma(c-b)}
\int_0^1 t^{b-1} (1-t)^{c-b-1} (1-xt)^{-a} \,dt.
\end{equation}
The next result describes the behavior of the hypergeometric function as $x\uparrow1$.
\begin{prop}[{\cite[Thm.~2.1.3 and Thm.~2.2.2]{Andrews1999}}]
\label{prop:hyperlim1}
\leavevmode
\begin{enumerate}[(i)]
\item \label{prop-i}
If $c<a+b$, then \
$\displaystyle \lim_{x\uparrow1}\frac{\hyper(a,b;c;x)}{(1-x)^{c-a-b}}
= \frac{\Gamma(c)\Gamma(a+b-c)}{\Gamma(a)\Gamma(b)}$.

\item \label{prop-ii}
If $c=a+b$, then \
$\displaystyle
\lim_{x\uparrow1}\frac{\hyper(a,b;a+b;x)}{\log(1/(1-x))}
= \frac{\Gamma(a+b)}{\Gamma(a)\Gamma(b)}$.

\item \label{prop-iii}
If $c>a+b$, then \
$\displaystyle
\hyper(a,b;c;1) = \frac{\Gamma(c)\Gamma(c-a-b)}{\Gamma(c-a)\Gamma(c-b)}$.
\end{enumerate}
\end{prop}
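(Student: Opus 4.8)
The plan is to obtain all three assertions directly from Euler's integral representation \eqref{eq:hypergeom-int}, which is available here because we assume $c>b>0$; for $x<1$ the integrand there is integrable on $[0,1]$ (the factor $t^{b-1}$ at $t=0$ since $b>0$, the factor $(1-t)^{c-b-1}$ at $t=1$ since $c-b>0$, and $(1-xt)^{-a}$ stays bounded), so $x\mapsto\hyper(a,b;c;x)$ is well defined and continuous on $(-\infty,1)$. For the third assertion ($c>a+b$) one simply sets $x=1$: now $c-a-b>0$, the integrand remains integrable at $t=1$ and collapses to $t^{b-1}(1-t)^{c-a-b-1}$, whence
\[
\hyper(a,b;c;1)=\frac{\Gamma(c)}{\Gamma(b)\Gamma(c-b)}\,B(b,\,c-a-b)=\frac{\Gamma(c)\Gamma(c-a-b)}{\Gamma(c-a)\Gamma(c-b)},
\]
using $B(b,c-a-b)=\Gamma(b)\Gamma(c-a-b)/\Gamma(c-a)$; that $\hyper(a,b;c;1)=\lim_{x\uparrow1}\hyper(a,b;c;x)$ then follows by monotone (or dominated) convergence.

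For the first and second assertions the singularity at $t=1$ must first be rescaled. Note that $c>b>0$ together with $c\le a+b$ forces $a>c-b>0$, and the substitution $t=1-(1-x)w$ turns \eqref{eq:hypergeom-int} into
\[
\hyper(a,b;c;x)=\frac{\Gamma(c)}{\Gamma(b)\Gamma(c-b)}\,(1-x)^{c-a-b}\int_0^{1/(1-x)}(1-(1-x)w)^{b-1}w^{c-b-1}(1+xw)^{-a}\,dw.
\]
In the first case ($c-a-b<0$) I would show the integral tends, as $x\uparrow1$, to $\int_0^\infty w^{c-b-1}(1+w)^{-a}\,dw=B(c-b,\,a+b-c)=\Gamma(c-b)\Gamma(a+b-c)/\Gamma(a)$. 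Splitting the range at $w=\tfrac12(1-x)^{-1}$, on the lower part the factors $(1-(1-x)w)^{b-1}$ and $(1+xw)^{-a}$ obey uniform bounds that license dominated convergence with majorant $C\,w^{c-b-1}(1+w)^{-a}$ (integrable precisely because $c-b>0$ and $a+b-c>0$), while the upper part, rewritten back in the variable $t$, equals $\int_0^{1/2}t^{b-1}(1-t)^{c-b-1}(1-xt)^{-a}\,dt=O(1)$ and so contributes $(1-x)^{a+b-c}O(1)=o(1)$ to the rescaled integral. Multiplying through gives $\hyper(a,b;c;x)\sim\frac{\Gamma(c)\Gamma(a+b-c)}{\Gamma(a)\Gamma(b)}(1-x)^{c-a-b}$.

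The second case ($c=a+b$, hence $c-b=a$) is the delicate one: the prefactor power vanishes and the identity above reads $\hyper(a,b;a+b;x)=\frac{\Gamma(a+b)}{\Gamma(a)\Gamma(b)}J(x)$ with $J(x)=\int_0^{1/(1-x)}(1-(1-x)w)^{b-1}w^{a-1}(1+xw)^{-a}\,dw$, and since the limiting integrand $w^{a-1}(1+w)^{-a}$ is $\sim w^{-1}$ at infinity, $J(x)$ diverges logarithmically, so the point is to pin down the constant. Fixing $\epsilon\in(0,1)$ and writing $N=(1-x)^{-1}$, I split $J(x)=\int_0^{\epsilon N}+\int_{\epsilon N}^{N}$; the tail $\int_{\epsilon N}^{N}$ equals, back in the variable $t$, $\int_0^{1-\epsilon}t^{b-1}(1-t)^{a-1}(1-xt)^{-a}\,dt$, which stays bounded as $x\uparrow1$, while on $[0,\epsilon N]$ one has $1-(1-x)w\in[1-\epsilon,1]$ and $|1+xw-(1+w)|=(1-x)w\le\epsilon$, so $(1-(1-x)w)^{b-1}(1+xw)^{-a}=(1+O(\epsilon))(1+w)^{-a}$ uniformly and $\int_0^{\epsilon N}w^{a-1}(1+w)^{-a}\,dw=\log N+O_\epsilon(1)$; hence $J(x)=(1+O(\epsilon))\log N+O_\epsilon(1)$. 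Dividing by $\log N$, letting $x\uparrow1$ and then $\epsilon\downarrow0$ yields $J(x)/\log N\to1$, i.e.\ $\hyper(a,b;a+b;x)\sim\frac{\Gamma(a+b)}{\Gamma(a)\Gamma(b)}\log\tfrac1{1-x}$. The routine parts are the Beta evaluation in the third assertion and the dominated-convergence estimate in the first; the one place requiring care is this second case, where the $1+O(\epsilon)$ multiplicative errors must be kept from corrupting the leading constant — which works only because the leading term tends to $\infty$, so the order of the two limits ($x\uparrow1$ first, $\epsilon\downarrow0$ afterwards) is essential.
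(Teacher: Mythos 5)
Your proof is correct, but it is worth pointing out that the paper itself does not prove Proposition~\ref{prop:hyperlim1}: it imports it by citation from Andrews--Askey--Roy, so what you have produced is a self-contained substitute rather than a variant of an argument in the text. Your derivation works entirely from the Euler integral \eqref{eq:hypergeom-int}, exactly in the regime $c>b>0$ (real parameters) that the paper assumes, and the key steps check out: the substitution $t=1-(1-x)w$ does turn $1-xt$ into $(1-x)(1+xw)$ and extracts the prefactor $(1-x)^{c-a-b}$; in case (i) the remark that $c\le a+b$ and $c>b$ force $a>0$ is what legitimizes the majorant $C\,w^{c-b-1}(1+w)^{-a}$ (via $1+xw\ge(1+w)/2$ for $x\ge\tfrac12$), and reading the upper range back in the variable $t$ correctly yields a contribution $(1-x)^{a+b-c}O(1)=o(1)$; in case (ii) the two-scale split at $\epsilon/(1-x)$, with the limits taken in the order $x\uparrow1$ first and $\epsilon\downarrow0$ afterwards, correctly pins down the constant, since the uniform $1+O(\epsilon)$ factors cannot disturb a leading term that diverges; and case (iii) is the Beta evaluation $B(b,c-a-b)=\Gamma(b)\Gamma(c-a-b)/\Gamma(c-a)$ together with monotone/dominated convergence to identify the value at $x=1$ with the limit, which is in fact the form used in Lemma~\ref{l:H3-l3}. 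For comparison, the classical route behind the cited theorems proves (iii) by Gauss's summation theorem (valid without the restriction $c>b>0$ and for complex parameters), deduces (i) from (iii) via the Euler--Pfaff transformation $\hyper(a,b;c;x)=(1-x)^{c-a-b}\hyper(c-a,c-b;c;x)$, and handles the logarithmic case (ii) separately; your direct blow-up analysis of the integral trades that generality for self-containedness, which is entirely adequate for the way the proposition is used in this paper.
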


This result allows us to investigate the asymptotic behavior of the double integral from the proof of Lemma~\ref{l:H3-l4}.
\begin{lemma}\label{l:H3-l3}
Let $\alpha$, $\beta$ and $\gamma$ satisfy the conditions \eqref{neq:procXconds}, and let $\theta>0$.
Denote
\[
h(T) = \int_0^T s^{2\alpha}(T-s)^\gamma\int_s^T e^{\theta t}t^{\beta} (t-s)^\gamma\,dt\, ds, \quad T>0.
\]
\begin{enumerate}
\item
If $\gamma\in(-1,-\frac12)$, then
\begin{equation}\label{eq:h1}
\frac{h(T)}{T^{2\alpha+\beta} e^{\theta T}} \to \frac{\sigma^2}{\theta^{2\gamma+2}},
\quad\text{as } T\to\infty.
\end{equation}
\item
If $\gamma = -\frac12$, then
\[
\frac{h(T)}{T^{2\alpha+\beta} e^{\theta T}\log T} \to  \frac{1}{\theta},
\quad\text{as } T\to\infty.
\]
\item
If $\gamma > -\frac12$, then
\[
\frac{h(T)}{T^{2\alpha+\beta+2\gamma+1} e^{\theta T}} \to \frac{\varsigma^2}{\theta},
\quad\text{as } T\to\infty.
\]
\end{enumerate}
\end{lemma}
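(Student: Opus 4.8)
\textbf{Proof proposal for Lemma~\ref{l:H3-l3}.}

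The plan is to analyze the double integral $h(T)$ by first performing the inner integration over $t$ and then rescaling the outer variable. For the inner integral $\int_s^T e^{\theta t} t^\beta (t-s)^\gamma\,dt$, the dominant contribution comes from $t$ near $T$ because of the exponential weight; via an integration-by-parts / l'H\^opital-type argument (or equivalently a Watson-lemma heuristic made rigorous), one expects $\int_s^T e^{\theta t} t^\beta (t-s)^\gamma\,dt \sim \theta^{-1} e^{\theta T} T^\beta (T-s)^\gamma$ for each fixed $s$, but the estimate must be made uniform in $s\in[0,T]$ with an explicit handle on the $(T-s)^\gamma$ singularity. After substituting this asymptotic inside, $h(T)$ should behave like $\theta^{-1} e^{\theta T} T^\beta \int_0^T s^{2\alpha} (T-s)^{2\gamma}\,dt$-type expression; more precisely $h(T) \sim \theta^{-1} e^{\theta T} T^\beta \int_0^T s^{2\alpha}(T-s)^{2\gamma}\,ds$, after which one rescales $s = Tz$ to get $T^{2\alpha+2\gamma+1}\int_0^1 z^{2\alpha}(1-z)^{2\gamma}\,dz$, provided the exponent $2\gamma+1$ is positive, i.e.\ $\gamma>-\tfrac12$.

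When $\gamma>-\tfrac12$ the Beta integral $\int_0^1 z^{2\alpha}(1-z)^{2\gamma}\,dz = \mathrm{B}(2\alpha+1,2\gamma+1) = \varsigma^2$ converges, giving the third case with the power $T^{2\alpha+\beta+2\gamma+1}$. When $\gamma\le-\tfrac12$ the naive rescaling fails because the mass of $s^{2\alpha}(T-s)^{2\gamma}$ concentrates near $s=T$, so I would instead localize to a neighborhood $s\in(T-\delta T, T)$, replace $s^{2\alpha}$ by $T^{2\alpha}$ there, and analyze $\int^T (T-s)^{2\gamma}\,(\text{inner integral})\,ds$ directly. This is precisely where the hypergeometric machinery of Appendix~\ref{s:B} enters: changing variables to $u=(t-s)/(T-s)$ and $s=T(1-w)$ turns the iterated integral into something of the form $\int_0^1 w^{2\gamma}\,\hyper(-\gamma,\gamma+1;\ldots;w)$-type expression, or more directly one recognizes $\int_0^1 (1-v)^\gamma v^{\gamma}(\cdot)\,dv$ and reads off the limit from Proposition~\ref{prop:hyperlim1}. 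The constant $\sigma^2 = \frac{\Gamma(\gamma+1)\Gamma(-2\gamma-1)\Gamma(2\gamma+2)}{\Gamma(-\gamma)}$ in case (i) is exactly of the shape produced by case~\eqref{prop-i} of Proposition~\ref{prop:hyperlim1} (note $\Gamma(-2\gamma-1)$ appears precisely when $c-a-b = -2\gamma-1 < 0$), which is a strong consistency check that this is the right route. The borderline $\gamma=-\tfrac12$ yields the logarithmic factor through case~\eqref{prop-ii} of the same proposition, with constant $\Gamma(a+b)/(\Gamma(a)\Gamma(b))$ collapsing to $1/\theta$ after the dust settles.

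Concretely, the steps in order: (1) establish the uniform-in-$s$ asymptotic for the inner $t$-integral, with an error term that is integrable against $s^{2\alpha}(T-s)^\gamma$ — this requires splitting the range of $t$ and controlling $\int_s^T e^{\theta t} t^\beta (t-s)^\gamma\,dt$ below by its value near $T$ and above by monotonicity of $e^{\theta t}t^\beta$; (2) substitute and reduce $h(T)$, up to $o(\cdot)$, to $\theta^{-1}e^{\theta T}$ times a single deterministic integral $J(T) = \int_0^T s^{2\alpha} T^\beta (T-s)^{2\gamma}\,ds$ — here one must be careful that the error from replacing $t^\beta$ by $T^\beta$ and from the inner-integral error term is genuinely lower order in each of the three regimes; (3) evaluate $J(T)$ asymptotically: direct rescaling for $\gamma>-\tfrac12$, and localization plus the hypergeometric limit (Proposition~\ref{prop:hyperlim1}) for $\gamma\le-\tfrac12$; (4) collect constants and match $\sigma$, $\varsigma$ and the bare $1/\theta$. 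The main obstacle is step~(1)–(2): getting the inner-integral asymptotic \emph{uniformly} in $s$ all the way up to $s=T$, since the factor $(t-s)^\gamma$ with $\gamma<0$ is singular at the lower endpoint and the naive pointwise estimate $\sim\theta^{-1}e^{\theta T}T^\beta(T-s)^\gamma$ degrades exactly where the outer integrand is concentrated; one likely needs a two-sided bound of the form $c_1 e^{\theta T}T^\beta(T-s)^\gamma \le \int_s^T e^{\theta t}t^\beta(t-s)^\gamma\,dt \le c_2 e^{\theta T}T^\beta(T-s)^\gamma$ valid for $s\in[T/2,T]$ together with a crude bound on $s\in[0,T/2]$ showing that range contributes negligibly (which uses $\gamma>-1$ for integrability of $(T-s)^\gamma$ — but note $2\gamma$, not $\gamma$, governs case (i), and $2\gamma>-2$ is automatic, while $2\gamma+1$ can be negative, forcing the hypergeometric route rather than a plain Beta function).
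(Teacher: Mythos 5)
Your high-level intuition is sound — for $\gamma>-\tfrac12$ the Laplace-type replacement of the inner integral by $\theta^{-1}e^{\theta T}T^{\beta}(T-s)^{\gamma}$ followed by the Beta integral does give $\varsigma^2/\theta$, and for $\gamma\le-\tfrac12$ the limit is indeed carried by $s$ near $T$ — but the steps you yourself identify as the crux are not established, and the tools you propose to fill them do not work as stated. (a) The two-sided bound $c_1 e^{\theta T}T^{\beta}(T-s)^{\gamma}\le\int_s^T e^{\theta t}t^{\beta}(t-s)^{\gamma}\,dt\le c_2 e^{\theta T}T^{\beta}(T-s)^{\gamma}$ on $s\in[T/2,T]$ is false on the lower side near the endpoint: as $T-s\to0$ the inner integral is $\sim e^{\theta T}T^{\beta}(T-s)^{\gamma+1}/(\gamma+1)$, of strictly smaller order than $(T-s)^{\gamma}$; and when $\gamma<-\tfrac12$ the window $T-s=O(1)$ is exactly where the limit lives, so the error from the naive replacement is not lower order — indeed that replacement produces the divergent integral $\int_0^T s^{2\alpha}(T-s)^{2\gamma}\,ds$ in cases (1)–(2), so your steps (1)–(2) cannot be completed in that form there. (b) The substitution you suggest for the localized regime, $u=(t-s)/(T-s)$, $s=T(1-w)$, leaves the factor $e^{\theta u(T-s)}$ inside the inner integral, so it is not an Euler-type (purely algebraic) integral and Proposition~\ref{prop:hyperlim1} cannot be read off from it; your consistency check also has a sign slip (with the relevant parameters $c-a-b=2\gamma+1<0$, and it is $\Gamma(a+b-c)=\Gamma(-2\gamma-1)$ that enters). (c) The borderline case $\gamma=-\tfrac12$, where the $\log T$ must be extracted from the breakdown scale $T-s\asymp1$, is only gestured at.

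For comparison, the paper sidesteps the uniformity issue entirely: it first interchanges the order of integration so that $e^{\theta t}$ sits outside, substitutes $t=T-x$, $s=(T-x)y$, and recognizes the inner $y$-integral as the Euler representation of $\hyper(-\gamma,2\alpha+1;2\alpha+\gamma+2;1-x/T)$; the three regimes then follow from the three regimes of $\hyper$ at argument $1$ (Proposition~\ref{prop:hyperlim1}), with a split at $T/2$ and dominated convergence. If you want to avoid hypergeometric functions, your localization idea can be made rigorous directly: setting $s=T-u$, $t=T-v$ one shows, for $\gamma\in(-1,-\tfrac12)$, that $h(T)\sim T^{2\alpha+\beta}e^{\theta T}\int_0^\infty u^{\gamma}\int_0^u e^{-\theta v}(u-v)^{\gamma}\,dv\,du$, and the double integral equals $\mathrm{B}(\gamma+1,-2\gamma-1)\,\Gamma(2\gamma+2)\,\theta^{-2\gamma-2}=\sigma^2/\theta^{2\gamma+2}$, matching the claim — but that reduction still requires the uniform control your draft leaves open, plus a separate matched-asymptotics argument at $\gamma=-\tfrac12$.
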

\begin{proof}
Changing the order of integration and then making the substitutions $t=T-x$ and $s=(T-x) y$, we transform $h(T)$ as follows:
\begin{align}
h(T)
&=\int_0^T e^{\theta t}t^{\beta} \int_0^t s^{2\alpha}(T-s)^\gamma  (t-s)^\gamma\, ds\,dt
\notag\\
&=\int_0^T e^{\theta (T-x)}(T-x)^{\beta} \int_0^{T-x} s^{2\alpha}(T-s)^\gamma  (T-x-s)^\gamma\, ds\,dx
\notag\\
&=\int_0^T e^{\theta (T-x)}(T-x)^{2\alpha+\beta+\gamma+1} \int_0^{1} y^{2\alpha}(T-(T-x)y)^\gamma  (1-y)^\gamma\, dy\,dx
\notag\\
&= T^{2\alpha+\beta+2\gamma+1} e^{\theta T}
\notag\\*
&\quad\times
\int_0^T e^{-\theta x} \left(1-\frac{x}{T}\right)^{2\alpha+\beta+\gamma+1} \int_0^1 z^{2\alpha} (1-z)^\gamma \biggl(1-\left (1-\frac{x}{T}\right )z\biggr)^\gamma dz\,dx
\notag\\
&\eqqcolon T^{2\alpha+\beta+2\gamma+1} e^{\theta T} g(T).
\label{eq:h-via-g}
\end{align}
Hence, we need to study the asymptotic behaviour of $g(T)$
in three cases.

%
1. Let $\gamma\in(-1,-\frac12)$.
We write
\begin{equation}\label{eq:g11}
T^{2\gamma+1} g(T) =
\int_0^T x^{2\gamma+1} e^{-\theta x} \left(1-\frac{x}{T}\right)^{2\alpha+\beta+\gamma+1}
f_1\left (\frac{x}{T}\right )dx,
\end{equation}
where
\begin{align*}
f_1(y)&\coloneqq y^{-2\gamma-1} \int_0^1 z^{2\alpha} (1-z)^\gamma \left (1-\left (1-\frac{x}{T}\right )z\right)^\gamma dz
\\
&=\frac{\Gamma(2\alpha+1)\Gamma(\gamma+1)}{\Gamma(2\alpha+\gamma + 2)} y^{-2\gamma-1}\hyper\left (-\gamma,2\alpha+1;2\alpha+\gamma + 2; 1 - y\right).
\end{align*}
Here we have used the integral representation \eqref{eq:hypergeom-int} for the hypergeometric function.
Further, Proposition~\ref{prop:hyperlim1}\,\ref{prop-i} implies that
\begin{equation}\label{eq:g12}
f_1(y) \to \frac{\Gamma(\gamma+1)\Gamma(-2\gamma-1)}{\Gamma(-\gamma)}, \quad\text{as }y\downarrow0.
\end{equation}
If we define $f_1(y)$ at $y=0$ by the above convergence, we will obtain the continuous function on $[0,1]$. Therefore, $f_1$ is bounded on $[0,1]$, i.\,e.,
\begin{equation}\label{eq:g13}
f_1^*\coloneqq\sup_{y\in[0,1]} f_1(y) <\infty.
\end{equation}
In order to pass to the limit in \eqref{eq:g11}, we split the integral into two parts as follows:
$\int_0^T = \int_0^{T/2}+\int_{T/2}^T$. For the first part we can apply the Lebesgue dominated convergence theorem. Indeed, if $x\in[0,T/2]$, then
\[
\left(1-\frac{x}{T}\right)^{2\alpha+\beta+\gamma+1}
\le \max\left(1, 2^{-2\alpha-\beta-\gamma-1}\right)\eqqcolon c,
\]
Therefore
\[
x^{2\gamma+1} e^{-\theta x} \left(1-\frac{x}{T}\right)^{2\alpha+\beta+\gamma+1}
f_1\left (\frac{x}{T}\right ) \indicatorfun_{x\in[0,\frac T2]}
\le c f_1^* x^{2\gamma+1} e^{-\theta x}.
\]
Here the right-hand side is integrable on $(0,\infty)$:
\begin{equation}\label{eq:g14}
\int_0^\infty x^{2\gamma+1} e^{-\theta x}
= \frac{1}{\theta^{2\gamma+2}}\int_0^\infty e^{-y} y^{2\gamma+1}\,dy
= \frac{\Gamma(2\gamma+2)}{\theta^{2\gamma+2}}<\infty.
\end{equation}
Hence, letting $T\to\infty$ and taking into account \eqref{eq:g12} and \eqref{eq:g14}, we get
\begin{equation}\label{eq:g15}
\begin{split}
\MoveEqLeft
\int_0^{T/2} x^{2\gamma+1} e^{-\theta x} \left(1-\frac{x}{T}\right)^{2\alpha+\beta+\gamma+1}
f_1\left (\frac{x}{T}\right )dx
\\*
&\to f_1(0) \int_0^{\infty} x^{2\gamma+1} e^{-\theta x}\,dx
=\frac{\Gamma(\gamma+1)\Gamma(-2\gamma-1)\Gamma(2\gamma+2)}{\Gamma(-\gamma)\theta^{2\gamma+2}}.
\end{split}
\end{equation}
For $x\in[T/2,T]$, we use the bound $e^{-\theta x} \le e^{-\theta T/2}$ together with \eqref{eq:g13}.
We obtain
\begin{align}
\MoveEqLeft
\int_{T/2}^T x^{2\gamma+1} e^{-\theta x} \left(1-\frac{x}{T}\right)^{2\alpha+\beta+\gamma+1}
f_1\left (\frac{x}{T}\right )dx
\notag\\
&\le f_1^* e^{-\frac{\theta T}{2}}\int_{T/2}^T x^{2\gamma+1}  \left(1-\frac{x}{T}\right)^{2\alpha+\beta+\gamma+1}dx
\notag\\
&= f_1^* e^{-\frac{\theta T}{2}}T^{2\gamma+2}\int_{1/2}^1 y^{2\gamma+1}  \left(1-y\right)^{2\alpha+\beta+\gamma+1}dy
\notag\\
&\le f_1^* B(2\gamma+2,2\alpha+\beta+\gamma+2) e^{-\frac{\theta T}{2}}T^{2\gamma+2}
\to0,
\quad\text{as }T\to\infty,
\label{eq:g16}
\end{align}
where $B(a,b) = \int_0^1 y^{a-1}(1-y)^{b-1}dy$ is the beta function.
Combining \eqref{eq:g11} with \eqref{eq:g15}--\eqref{eq:g16} we arrive at
\[
T^{2\gamma+1} g(T) \to \frac{\sigma^2}{\theta^{2\gamma+2}},
\quad\text{as } T\to\infty.
\]
Taking into account the representation \eqref{eq:h-via-g}, we get the desired convergence \eqref{eq:h1}.

The cases $\gamma = -\frac12$ and $\gamma > -\frac12$ are considered similarly, so we shall omit some details.

2. In particular, for $\gamma = -\frac12$ we have
\begin{equation}\label{eq:g21}
\frac{g(T)}{\log T} = \int_0^T e^{-\theta x} \left(1-\frac{x}{T}\right)^{2\alpha+\beta+\frac12} \frac{\log \frac{T}{x}}{\log T}
f_2\left (\frac{x}{T}\right )dx,
\end{equation}
where
\[
f_2(y) = \frac{\Gamma(2\alpha+1)\Gamma(\frac12)}{\Gamma(2\alpha+\frac32)} \cdot \frac{\hyper\left (\frac12,2\alpha+1;2\alpha+\frac32; 1 - y\right)}{\log(1/y)}.
\]
In order to justify the passage to the limit in \eqref{eq:g21},
the logarithmic term $\frac{\log (T/x)}{\log T}$ can be bounded as follows: for all $x>0$ and for sufficiently large~$T$
\[
\frac{\log \frac{T}{x}}{\log T}
= \frac{\log T - \log x}{\log T}
= 1 - \frac{\log x}{\log T}
\le 1 + \frac{C_\delta x^{-\delta}}{\log T}
\le 1 + C_\delta x^{-\delta},
\]
where $\delta\in (0,1)$ can be chosen arbitrarily, $C_\delta$ is a positive constant.
Then arguing as above and taking into account the convergence $f_2(\frac{x}{T})\to1$ (by Proposition~\ref{prop:hyperlim1}\,\ref{prop-ii}),
we arrive at
\[
\frac{g(T)}{\log T} \to \int_0^\infty e^{-\theta x} \,dx = \frac{1}{\theta},
\quad\text{as } T\to\infty.
\]

3. Finally, for $\gamma > -\frac12$, we have
\[
g(T) =
\int_0^T e^{-\theta x} \left(1-\frac{x}{T}\right)^{2\alpha+\beta+\gamma+1}
f_3\left (\frac{x}{T}\right )dx,
\]
where
\begin{align*}
f_3(y)&\coloneqq \frac{\Gamma(2\alpha+1)\Gamma(\gamma+1)}{\Gamma(2\alpha+\gamma + 2)} \hyper\left (-\gamma,2\alpha+1;2\alpha+\gamma + 2; 1 - y\right)
\\
&\to \frac{\Gamma(2\alpha+1)\Gamma(2\gamma+1)}{\Gamma(2\alpha+\gamma + 2)}
\quad\text{as } y\downarrow0,
\end{align*}
by Proposition~\ref{prop:hyperlim1}\,\ref{prop-iii}.
Letting $T\to\infty$, we get
\[
g(T) \to  \frac{\Gamma(2\alpha+1)\Gamma(2\gamma+1)}{\Gamma(2\alpha+2\gamma + 2)}\int_0^\infty e^{-\theta x} \,dx
=  \frac{\Gamma(2\alpha+1)\Gamma(2\gamma+1)}{\Gamma(2\alpha+2\gamma + 2)\theta},
\]
where the passage to the limit is justified similarly to the proof of the first statement of the lemma.
\end{proof}
\end{appendix}

\bibliographystyle{amsplain}
\bibliography{mrs}

\providecommand{\bysame}{\leavevmode\hbox to3em{\hrulefill}\thinspace}
\providecommand{\MR}{\relax\ifhmode\unskip\space\fi MR }
\providecommand{\MRhref}[2]{%
  \href{http://www.ams.org/mathscinet-getitem?mr=#1}{#2}
}
\providecommand{\href}[2]{#2}
\begin{thebibliography}{10}

\bibitem{Andrews1999}
G.~E. Andrews, R.~Askey, and R.~Roy, \emph{Special functions}, Encyclopedia of
  Mathematics and its Applications, vol.~71, Cambridge University Press,
  Cambridge, 1999.

\bibitem{Ayache2000}
A.~Ayache and J.~Levy~Vehel, \emph{The generalized multifractional {B}rownian
  motion}, Stat. Inference Stoch. Process. \textbf{3} (2000), no.~1--2, 7--18.

\bibitem{belfadli2011}
R.~Belfadli, K.~Es-Sebaiy, and Y.~Ouknine, \emph{Parameter estimation for
  fractional {O}rnstein--{U}hlenbeck processes: non-ergodic case}, Frontiers in
  Science and Engineering \textbf{1} (2011), 1--16.

\bibitem{bnmz2017}
K.~Borovkov, Y.~Mishura, A.~Novikov, and M.~Zhitlukhin, \emph{Bounds for
  expected maxima of {G}aussian processes and their discrete approximations},
  Stochastics \textbf{89} (2017), no.~1, 21--37.

\bibitem{emeso2016}
M.~El~Machkouri, K.~Es-Sebaiy, and Y.~Ouknine, \emph{Least squares estimator
  for non-ergodic {O}rnstein--{U}hlenbeck processes driven by {G}aussian
  processes}, J. Korean Statist. Soc. \textbf{45} (2016), no.~3, 329--341.

\bibitem{Embrechts2002}
P.~Embrechts and M.~Maejima, \emph{Selfsimilar processes}, Princeton Series in
  Applied Mathematics, Princeton University Press, Princeton, NJ, 2002.

\bibitem{Lamperti1962}
J.~Lamperti, \emph{Semi-stable stochastic processes}, Trans. Amer. Math. Soc.
  \textbf{104} (1962), 62--78.

\bibitem{Marcus1972}
M.~B. Marcus, \emph{Upper bounds for the asymptotic maxima of continuous
  {G}aussian processes}, Ann. Math. Statist. \textbf{43} (1972), 522--533.

\bibitem{mishshevshk}
Y.~Mishura, G.~Shevchenko, and S.~Shklyar, \emph{Gaussian processes with
  {V}olterra kernels}, Stochastic Processes, Statistical Methods, and
  Engineering Mathematics (S.~Silvestrov, A.~Malyarenko, Y.~Ni, and M.~Rancic,
  eds.), Springer, Cham, 2022.

\bibitem{Part1}
Y.~Mishura and S.~Shklyar, \emph{Gaussian {V}olterra processes with power-type
  kernels. {P}art {I}}, Mod. Stoch. Theory Appl. \textbf{9} (2022), no.~3,
  313--338.

\bibitem{Part2}
\bysame, \emph{Gaussian {V}olterra processes with power-type kernels. {P}art
  {II}}, Mod. Stoch. Theory Appl. (2022), 22 pages, DOI 10.15559/22-VMSTA211.

\bibitem{Norros1999}
I.~Norros, E.~Valkeila, and J.~Virtamo, \emph{An elementary approach to a
  {G}irsanov formula and other analytical results on fractional {B}rownian
  motions}, Bernoulli \textbf{5} (1999), no.~4, 571--587.

\bibitem{R2011}
K.~V. Ralchenko, \emph{Approximation of multifractional {B}rownian motion by
  absolutely continuous processes}, Theory Probab. Math. Statist. (2011),
  no.~82, 115--127.

\bibitem{sovi}
T.~Sottinen and L.~Viitasaari, \emph{Stochastic analysis of {G}aussian
  processes via {F}redholm representation}, Int. J. Stoch. Anal. (2016), Art.
  ID 8694365, 15.

\bibitem{Yazigi2015}
A.~Yazigi, \emph{Representation of self-similar {G}aussian processes}, Statist.
  Probab. Lett. \textbf{99} (2015), 94--100.

\end{thebibliography}

\end{document}